\documentclass{article}
\usepackage{amssymb,amsmath,amsthm}

\usepackage{hyperref}
\usepackage{pgfplotstable}
\usepackage{pgfplots}
\usepackage{mathrsfs}

\usetikzlibrary{arrows.meta, shapes.geometric, positioning}
 \usetikzlibrary{cd}
\pgfplotsset{compat=1.15}

\newcommand{\ZZ}{\mathbb{Z}}
\newcommand{\NN}{\mathbb{N}}
\newcommand{\ba}[1]{\begin{array}{#1}}
\newcommand{\ea}{\end{array}}
\newcommand{\dst}{\displaystyle}
\newcommand{\supp}{\mathrm{supp}\,}

\newcommand{\sumset}{\mathrm{Sums}}

\newcommand{\Wp}{\mathit{WP}} 
\newcommand{\notdiv}{\nmid}
\newcommand{\lcm}{\mathrm{lcm}}
\newcommand{\zsat}{{\tt z3}}
\newcommand{\mmod}{\!\!\mod}
\newcommand{\emset}{\varnothing} 

\newcommand{\ab}[1]{\langle #1\rangle}
\newcommand{\akey}{t}  
\renewcommand{\lcm}{\operatorname{lcm}}
\newcommand{\consid}[1]{(({#1}))}  

\newcommand{\MU}[1]{\uplus{#1}}
\newcommand{\Sigg}{\tilde\Sigma}
\newcommand{\Zset}{\mathcal{Z}}
\newcommand{\Stset}[2]{\mathrm{ST}_{#1}(#2)}
\newcommand{\Epic}[1]{\Psi^{#1}}

\newcommand{\cA}{\mathcal{A}}
\newcommand{\cB}{\mathcal{B}}

\newcommand{\myplus}{\boxplus}
\newcommand{\myminus}{\boxminus}

\theoremstyle{plain}
\newtheorem{theorem}{Theorem}[section]
\newtheorem{lemma}[theorem]{Lemma}
\newtheorem{proposition}[theorem]{Proposition}
\newtheorem{corollary}[theorem]{Corollary}

\theoremstyle{definition}
\newtheorem{definition}[theorem]{Definition}
\newtheorem{example}[theorem]{Example}
\newtheorem{remark}[theorem]{Remark}
\newtheorem{conjecture}[theorem]{Conjecture}
\newtheorem{notation}[theorem]{Notation}

\title{The triplication method for constructing strong starters}

\author{Oleg Ogandzhanyants$^\dag$, Sergey Sadov$^\ddag$, Margo Kondratieva$^*$%
}

\begin{document}
	
\maketitle
{\small
$^\dag$  Russian State Pedagogical University, Saint Petersburg, Russia

$^\ddag$  Private school, Moscow, Russia

$^*$ 
Memorial University, St.\ John's NL  A1C~5S7, Canada
}

\begin{abstract}
The triplication method for constructing strong starters  in $\ZZ_{3m}$ 
from starters in $\ZZ_{m}$ 
(say, a starter of order 21 from a starter of order 7) was proposed by the authors in 2025.
The method reduced construction of the particular combinatorial design (a strong starter in a cyclic group)
to solving a Sudoku-type problem 
 -- an independent task with its own tools and techniques available.  The Sudoku-type problem was formulated in terms of the so-called triplication table constructed from a  starter of  order $m$. The method was applicable  for  odd orders $m\ge 7$  not divisible by 3. 
In the present paper, our previous approach is developed in two directions: (1) the definition of the triplication table is generalized, which expands possibilities for its construction to include three base starters or  even ``pseudostarters''; (2) the formulation of the Sudoku-type problem is broadened to embrace various scenarios of ``modular encoding''  and reconstruction of strong starters from its solution. 
A theoretical gain of these developments consists  in the improved understanding of the general structure of the triplication approach. A practical outcome is elimination of  the requirement that 
$m$ be not divisible by 3. This leads to a broader scope of  
 strong starters obtainable by triplication:
 any latent strong starter of odd order $3m$  
 can emerge this way.

\medskip
 Keywords: strong starter, triplication, triplication table, modular Sudoku problem, pseudostarter. 
 
 \end{abstract}

\section{Introduction}
This paper is a sequel to our paper \cite{OSK25a} which will be referred to often.

The triplication method aims to construct 
a strong starter of order divisible by 3,
$n=3m$. 
We recall properties of strong starters in Section~\ref{ssec:3table}.
Relevant definitions can be also found  in \cite{OSK25a}. 
For more information about starters and related combinatorial designs consult \cite{Dinitz_Handbook_07}. 

In \cite{OSK25a} we described triplication as a multi-part process. First, one 
constructs a {\it triplication table} (TT) given a strong starter of order $m$ not divisible by 3. Then one sets up a  {\it Modular Sudoku Problem} (MSP),
which is a system of certain arithmetical equations and inequalities.
These stages are fully formalized and readily programmed in a computer.
Then, one has to  solve the obtained MSP.  
 This is the hardest part of the process.
Small size cases can be attempted manually, but overall we have resorted to an existing third-party universal solver. 
The  final part, short and simple, serves to recover the final result (a strong starter of order $n$) from the original TT and the 
found solution of MSP  
by means of the Chinese Remainder Theorem (CRT).

An important limitation of the triplication method as given in  \cite{OSK25a} was the requirement that $m$ be not divisible by 3. 
Here we remove this limitation. In addition, we extend the meaning of triplication in two ways. First, in Section~\ref{sec:3ply} we redefine TT; an explicit construction of TT more general than the one given in  \cite{OSK25a} is proposed in  Section~\ref{sec:construcion-tt}.  
Second, in Section~\ref{sec:Sudoku} we outline a general framework to set up an MSP 
and construct a strong starter of order $3m$ based on the given TT and a contingent solution of the MSP.   Theorem~\ref{thm:main} solidifies this framework, ensuring that a TT and a solution of the MSP yield  a strong starter.

\newcommand{\lbracket}{[}
\newcommand{\rbracket}{]}

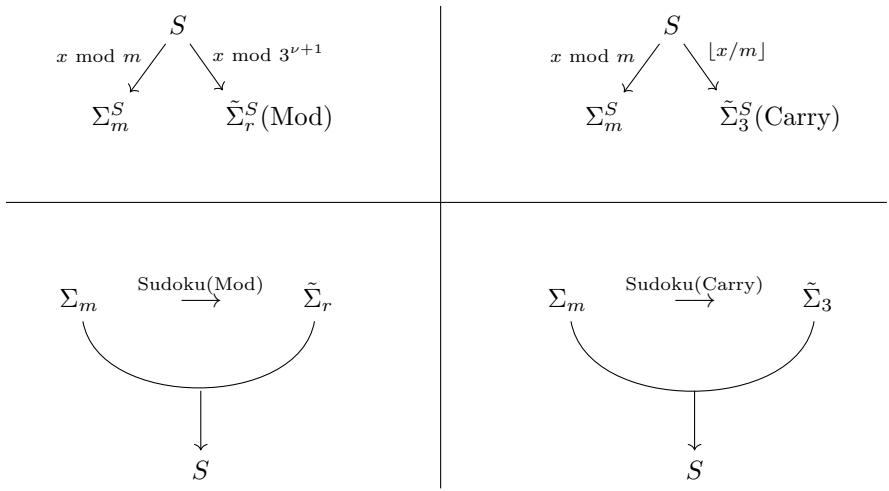
\begin{figure}
{
\newcommand{\tSigma}{\tilde{\Sigma}}
\setlength{\tabcolsep}{10pt}
\renewcommand{\arraystretch}{3}
\begin{tabular}{cc|cc}
 $\dst
 \begin{tikzcd}[column sep=tiny]
 & S\ar[dl, "x\mmod m"'] \ar[dr, "x\mmod 3^{\nu+1}"]
 &
 &[1.5em] \\
 \Sigma_m^S 
 &
& \tSigma_r^S  \text{\lefteqn{\rm(Mod)}}
 &
 & \\[0.5em]
 \end{tikzcd}
 $
 & & &
 %
 %
 $\dst
 \begin{tikzcd}[column sep=tiny]
 & S\ar[dl, "x\mmod m"'] \ar[dr, "\lfloor x/m\rfloor"]
 &
 &[1.5em] \\
 \Sigma_m^S 
 &
& \tSigma_3^S  \text{\lefteqn{\rm(Carry)}}
 &
 & \\[0.5em]
 \end{tikzcd}
 $
%
 %
 \\
 \hline
 $\dst
 \begin{tikzcd}[column sep=tiny]
 &&&&\\[0.5em]
 &\Sigma_m  \ar[rr, bend right=80, dash] 
&
 \stackrel{\rm Sudoku(Mod)}{\longrightarrow} 
  &
\tSigma_r  
 &
\\[0.1em]
 &
& 
 {} \ar[d]
 &
{}
&
\\[0.5em]
&& S
& &
\end{tikzcd}
$
%
& &&
$\dst
 \begin{tikzcd}[column sep=tiny]
 &&&&\\[0.5em]
 &\Sigma_m   \ar[rr, bend right=80, dash] 
&
 \stackrel{\rm Sudoku(Carry)}{\longrightarrow} 
  &
\tSigma_3  
 &
\\[0.1em]
 &
& 
{} \ar[d]
 &
&
\\[0.5em]
&& S
& &
\end{tikzcd}
$
\end{tabular}
}
\label{fig:3schemes}
\caption{Schemes of the triplication method: Mod scenario (left) and Carry scenario (right).
Top row: from strong starter to tables (derivation of conditions and constraints). 
Bottom row: from triplication table $\Sigma_m$ to strong starters.}
\end{figure}

The general framework arose from an effort 
to unify two concrete scenarios of triplication shown in Figure~\ref{fig:3schemes} in the left and right columns respectively. We label them ``Mod" and ``Carry" and discuss their specifics in Sections~\ref{sec:mod} and ~\ref{sec:carry}.
The top row in Figure ~\ref{fig:3schemes} depicts {\em analysis}.
We start with our objective, a strong starter $S$ of order $3m$, and work backward in order to identify properties of the objects corresponding to $S$ involved in the triplication process: the TT  $\Sigma_m^S$
and a solution $\Sigg_r^S$ of the MSP. 
Constraints described in Section~\ref{ssec:sigg-table}  (necessary conditions of MSP's solvability) are based on that analysis.

The bottom row depicts {\em synthesis}\ --- what triplication is designed for. More precisely, it shows 
the flow chart of the algorithm to build a strong starter from a triplication table. The horizontal arrow 
symbolizes solution of an MSP to get a suitable (``congruous'') $\Sigg$-table. Finally, the two tables are combined to obtain a strong starter.

Details of the two scenarios are elaborated upon in Sections~\ref{sec:mod} and \ref{sec:carry}, with examples.

Let $m=3^\nu p$, where $p$ is odd and not divisible by $3$.
In Section~\ref{ssec:3basic} we assume that $\nu=0$ and recall the details of our approach as presented in \cite{OSK25a}. In Section~\ref{sec:9etc} we show
how to set up a Sudoku-type problem modulo $r=3^{\nu+1}$, $\nu\geq 1$, whose solution yields a strong starter of order $3m$. 
This variant of triplication uses 
an extension of CRT where the moduli are not coprime.
The content of Section~\ref{sec:mod} 
was developed in part in the PhD thesis of the first author (OO) defended at Memorial University 
in 2025.

From the computational perspective, solving a Sudoku problem modulo $9$, $27$, etc.\
is more or much more costly than solving a problem modulo $3$.
It is possible to modify the method in such a way as to obtain an MSP modulo 3 regardless
whether 3 is coprime with $m$ or not. This is the content of Section~\ref{sec:carry}. A crucial novelty lies in the
simplest final step: instead of using CRT, we recover numbers from their quotients and remainders of division by $m$
in a straightforward way. 

Due to Theorem~\ref{thm:scenario-independence},
the sets of strong starters obtained from the same triplication table in the two approaches of Section~\ref{sec:mod} 
and~\ref{sec:carry} are the same. 

In the proposed scheme, the construction of a strong starter is contingent on solving an MSP.
In Section~\ref{sec:sudoku-solvability}, we discuss the MSP solvability and give more examples of strong starters constructed by our method. 
In Section~\ref{ssec:ex-sudoku-special} we put forward a conjecture regarding the MSP solvability in the case of a TT constructed by the methods from Section~\ref{sec:construcion-tt}.
In Section~\ref{ssec:ex-sudoku-random}, we discuss more general TTs found numerically.  

Overall, our exposition here does not stress numerical work; there is no systematic report on numerical results,
nor on performance along the lines of Section~8 of \cite{OSK25a}. 
However, all the presented theoretical schemes have been implemented in a Python code and extensively tested. 
The results included in order to illustrate theory constitute a small fraction of all that have been 
computed.

The paper concludes with a brief summary (Section~\ref{sec:conc}). 

It is important in this work to distinguish between the notions
of {\em set}, {\em multiset}, and {\em tuple}. We also use the notion of a {\em dataset} (arrangement for structured data). In Appendix 1 
we review the definitions applicable in our context.

\section{The triplication table}
\label{sec:3ply}


\subsection{Definition of the triplication table $\Sigma_m$} 
\label{ssec:3table}

Let $n=2k+1\geq 3$ be an odd integer. 
Let $S=\{\{x_i, y_i\}, \, i=1,\dots,k\}$ be a strong starter in $\ZZ_{n}=\{0,1,\dots,n-1\}$.  
This means that the (unordered) pairing $S$ has the following properties:

(i) The set of pairs of $S$ is a partition of $\ZZ_{n}^*=\ZZ_{n} \backslash \{0\}$.

(ii) The set of differences $\{\pm(x_i-y_i)_{i=1}^{k}\}$ comprises $\ZZ_{n}^*$.

(iii) Each pair of $S$ makes a unique non-zero sum modulo $n$:

if $\sumset(S)=\{x_i+y_i | \{x_i,y_i\}\in S\}$ then $\sumset(S)\subset \ZZ_{n}^*$ and $|\sumset(S)|=k$. 

(In (ii) and (iii) summation and subtraction of elements in $\ZZ_n$ are done modulo $n$.)

Suppose now that $n=3m$. 
Let us explore the left arrow in the scheme Fig~\ref{fig:3schemes}, top row (either column).

Consider the multiset $S_m$ of pairs of $S$ reduced modulo $m$. 
Each of the  properties of $S$ implies the corresponding property of $S_m$.

Denote by $\MU{S_m}$ the multiset union of all pairs of $S_m$. 

\begin{theorem}
\label{thm:starter-m-reduction}
Let $S$ be a strong starter in $\ZZ_{3m}$, $m=2q+1$, $q\geq 1$.
Let $S_m=\{\{x\mmod m,\, y\mmod m\}\mid \{x,y\}\in S\}$. 
Then

\smallskip
{\rm(i)} 
for $x\in\ZZ_m$, the multiplicity of $x$  in $\MU{S_m}$ equals $3$ if $x\neq 0$ and $2$ if $x=0$;

{\rm(ii)}
$S_m$ contains exactly one pair of type $\{t,\, t\}$, $t\in\ZZ_m^*$ and for every $d=1,\dots,q$
there are exactly three pairs whose differences are $\pm d \pmod m$; 

{\rm(iii)} 
at most three pairs in $S_m$ make the same non-zero sum 
modulo $m$ and at most two pairs in $S_m$ make zero sum modulo $m$;

{\rm(iv)}
No two pairs in $S_m$ are identical. 

\end{theorem}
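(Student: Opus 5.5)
The plan is to prove each item by looking at the reduction map $\pi:\ZZ_{3m}\to\ZZ_m$, $x\mapsto x\bmod m$. This map is a surjective group homomorphism, and every fibre has exactly three elements: $\pi^{-1}(a)=\{a,\,a+m,\,a+2m\}$. Each property (i)--(iv) then follows from the matching property of the strong starter $S$ by counting fibres.

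\emph{Item (i).} By property (i) of $S$, the union of the pairs of $S$ is exactly $\ZZ_{3m}^*$. Hence the multiplicity of $a\in\ZZ_m$ in $\MU{S_m}$ equals $|\pi^{-1}(a)\cap\ZZ_{3m}^*|$. This is $3$ for $a\ne0$ and $2$ for $a=0$, because the fibre of $0$ is $\{0,m,2m\}$ and $0$ itself is excluded.

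\emph{Item (ii).} By property (ii) of $S$, the differences $\pm(x-y)$ over the pairs of $S$ run through $\ZZ_{3m}^*$ exactly once. Each pair of $S$ contributes one class $\pm\delta$ with $\delta\in\{1,\dots,(3m-1)/2\}$. Reducing modulo $m$, the pair's difference becomes $\pm\pi(\delta)$.
\begin{itemize}
\item The pairs whose reduced difference is $0$ are those with $\delta\in\pi^{-1}(0)\setminus\{0\}=\{m,2m\}$. Since $2m\equiv -m$, these form the single class $\pm m$, so exactly one pair $\{x,y\}$ has $x\equiv y\pmod m$. This gives the unique pair $\{t,t\}$ in $S_m$. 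We also need $t\ne0$: if $t=0$, then $\{x,y\}=\{m,2m\}$, whose sum is $3m\equiv0$, contradicting property (iii) of $S$ (sums are nonzero).
\item For $d\in\{1,\dots,q\}$, the set of nonzero elements $u\in\ZZ_{3m}$ with $\pi(u)=\pm d$ is $\pi^{-1}(d)\cup\pi^{-1}(-d)$. This set has $6$ elements and is closed under negation, so it splits into $3$ classes $\pm\delta$. Hence exactly three pairs have reduced difference $\pm d$.
\end{itemize}

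\emph{Item (iii).} By property (iii) of $S$, the sums $x+y$ are distinct and nonzero in $\ZZ_{3m}$. A nonzero residue $s\in\ZZ_m$ has three preimages, so at most three pairs reduce to sum $s$. The residue $0$ has preimages $\{0,m,2m\}$, of which only $m$ and $2m$ are admissible, so at most two pairs have sum $0\bmod m$.

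\emph{Item (iv).} Suppose two distinct pairs $\{x,y\},\{x',y'\}$ of $S$ reduce to the same pair. Order them so that $x'\equiv x$ and $y'\equiv y \pmod m$, i.e.\ $x'=x+im$ and $y'=y+jm$.
\begin{itemize}
\item Their differences must be distinct classes $\pm$ in $\ZZ_{3m}$. Both have the same residue $\pm(x-y)\bmod m$.
\item The sums $x+y+(i+j)m$ and $x+y$ are distinct, so $i+j\not\equiv0\pmod3$.
\item The differences $x-y+(i-j)m$ and $x-y$ must be neither equal nor negatives of each other. Equality forces $i\ne j$.
\end{itemize}
I expect item (iv) to be the main obstacle, because the bare counting of items (i)--(iii) does not settle it. The real work is combining the sum and difference conditions with the disjointness in property (i).

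Since $x'\ne x$ or $y'\ne y$, at least one of $i,j$ is nonzero mod $3$. Examine the cases $(i,j)\in\{0,1,2\}^2\setminus\{(0,0)\}$ with $i+j\not\equiv0$, i.e.\ $(1,0),(0,1),(2,0),(0,2),(1,1),(2,2)$.
\begin{itemize}
\item \emph{$i=j$.} The differences coincide, which is a contradiction.
\item \emph{Exactly one of $i,j$ is $0$.} Say $j=0$; then $y'=y$. Disjointness of the pairs in property (i) forces $\{x,y\}=\{x',y'\}$, which is a contradiction. The case $i=0$ is symmetric.
\item \emph{Swapped orientation.} If the matching is instead $x'\equiv y$ and $y'\equiv x$, reorder $\{x',y'\}$ so that the first element corresponds to $x$. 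This reduces to the case already handled.
\end{itemize}
This finishes item (iv). The remaining risk is the reduced pair $\{t,t\}$, where the orientation is ambiguous. There the two orderings must both be tried, and both give one of the excluded cases, because only one pair of $S$ reduces to a diagonal pair by item (ii).
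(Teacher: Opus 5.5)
Your proposal is correct and follows essentially the same route as the paper: items (i)--(iii) come from counting the three-element fibres of reduction modulo $m$ (with $t\neq 0$ because $\{m,2m\}$ would have zero sum), and item (iv) comes from writing $x'=x+im$, $y'=y+jm$ and using distinct sums ($i+j\not\equiv 0$) and distinct differences ($i\not\equiv j$) modulo $3$ to force $i\equiv 0$ or $j\equiv 0$, which contradicts disjointness of the pairs --- the paper states this last step as $U-U'\not\equiv\pm(V-V')\pmod 3$ rather than enumerating cases. Your closing worry about the diagonal pair $\{t,t\}$ is unnecessary, since the labeling $x'\equiv x$, $y'\equiv y$ is always possible there and item (ii) already rules out two pairs of $S$ reducing to a diagonal pair.
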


\begin{proof}
(i)
Since the elements of $\ZZ_{3m}^*$ reduced modulo $m$ comprise a multiset containing exactly three of each elements of $\ZZ_m^*$ and exactly two $0$'s, the same distribution of elements appears in $\MU{S_m}$. 

(ii)  follows from property (ii) of the strong starter $S$. Here $t\ne 0$ because neither of the pairs $\{m,m\}, \{2m,2m\}, \{m,2m\}$ belongs to the strong starter $S$.

(iii) follows from property (iii) of the strong starter $S$. 

(iv) The proof is an adaptation of end of proof of Theorem 7.2 in \cite{OSK25a}.

Suppose there are pairs $\{x,y\}$ and $\{x',y'\}$ in $S$ such that $x\mmod{m}=x'\mmod{m}=u$
and $y\mmod{m}=y'\mmod{m}=v$. We have 
$x=mU+u$, $y=mV+v$, $x'=mU'+u$, $y'=mV'+v$ for some $U,V,U',V'\in\{0,1,2\}$.

By property (ii) of a strong starter, $x-y\not\equiv x'-y'\pmod{3m}$, hence $U-V\not\equiv U'-V'\pmod{3}$.

By property (iii) of a strong starter, $x+y\not\equiv x'+y'\pmod{3m}$, hence $U+V\not\equiv U'+V'\pmod{3}$.

Therefore $U-U'\not\equiv \pm(V-V')\pmod{3}$.
Hence either $U-U'\equiv 0\pmod{3}$ or $V-V'\equiv 0\pmod{3}$.
In either case we obtain a contradiction with
property (i) of a strong starter. For instance, if $U-U'\equiv 0\pmod{3}$, then $x-x'=m(U-U')\equiv 0\pmod{3m}$,
so $x\equiv x'\pmod{3m}$, a contradiction.
\end{proof}

\begin{definition}
\label{def:3table}
 A {\it triplication table} of order $m=2q+1$ 
 is a dataset (pairing) $\Sigma_m=[(u_0,v_0),(u_1,v_1),\dots,(u_{3q},v_{3q})]$ with components from $\ZZ_m$ satisfying the properties 

{\rm(i)} 
an element $x\in\ZZ_m$ is contained in $\Sigma_m$ three times 
if $x\neq 0$ and two times if $x=0$;

{\rm(ii)}
$\Sigma_m$ contains exactly one pair of the type $(t,t)$, namely $u_0=v_0=t$, $t\ne 0$.
For every $d=1,\dots,q$ there are three pairs with directed\footnote{Given an ordered pair $(u,v)$, we define the directed difference as $v-u$. 
} difference $\hat d$ modulo $m$, namely 
\begin{equation}
\label{rowdif3table}
v_{3d-2}-u_{3d-2}=v_{3d-1}-u_{3d-1}=v_{3d}-u_{3d}=\hat d,
\end{equation}
where $\hat d$ is either $d$ or $-d$, but it must be the same for all 3 such pairs.

{\rm(iii)} 
For $s\in\ZZ_m$, there are at most three pairs in $\Sigma_m$ that make the sum $s$ modulo $m$ if $s\neq 0$,
and at most two such pairs if $s=0$. 

{\rm(iv)}
No two pairs in $\Sigma_m$ are identical. 
\end{definition}

The conditions in Definition~\ref{def:3table} essentially repeat the properties of the mod $m$-reduced starter $S$ in Theorem~\ref{thm:starter-m-reduction}, but the pairs are ordered in a special way.
 
 \smallskip
 While mathematically $\Sigma_m$ is defined as an ordered pairing, it is very convenient to display it in the form of a table
using a layout introduced in \cite{OSK25a} according to the following rules.

\begin{itemize}
\item 
 The table contains $q+1$ rows and $3$ columns. The rows are
indexed by $0,1,\dots,q$, the columns are indexed by $0,1,2$. 

\item
The top row, called also the special row, is incomplete. It
 contains a single pair of type $(\akey,\ \akey)$, $\akey\in\ZZ_m^*$.
By convention, we place this pair in column $1$ (central). 
The value $\akey$ is called the {\em key}\ (of the table).
 
\item
The rows indexed $1$ to $q$ are called {\em regular}.
The $i$-th row (for $i=1,\dots, q$) contains pairs 
whose directed differences have common value $\pm i$, cf.~\eqref{rowdif3table}. 

The {\em regular part}\ of the triplication table is comprised by the $3q$ pairs in the $q$ regular rows.
\end{itemize}

\begin{definition}
\label{def:3table-s}
Let $S$ be a strong starter in $\ZZ_{3m}$ and let $S_m$ be its reduction modulo $m$ as in Theorem~\ref{thm:starter-m-reduction}.
Suppose the pairs in $S_m$ and elements in the pairs are ordered%
\footnote{There is a preferred order where $\hat d=d$ for $d=1,\dots,q$, but in general we allow different signs 
of each $\hat{d}$ for compatibility with \cite{OSK25a}.}
 so as to satisfy the conditions $u_0-v_0=0$ and \eqref{rowdif3table}.
The obtained dataset is a triplication table denoted $\Sigma_m^S$. We say that $\Sigma_m^S$
{\em is induced}\ by $S$.
\end{definition}

\begin{remark}
\label{rem:non-unique-sigma-m}
Theorem~\ref{thm:starter-m-reduction} guarantees that
the ordering required in Definition~\ref{def:3table-s} is always possible and the obtained dataset satisfies
the requirements of Definition~\ref{def:3table}. However there is a certain freedom in pair ordering.
Indeed, while the row index for every pair is fixed by Eq.~\eqref{rowdif3table} (the unique pair $(\akey,\akey)$ is placed in the special row), the column index of the pair is not specified. So there are $3!=6$ possibilities to arrange pairs within
every row, thus a total of $6^q$ possible arrangements for $\Sigma_m^S$. Also, there are $2^q$ choices of
signs for the directed differences $\hat d$.
\end{remark}

In view of the latter Remark, we introduce an equivalence relation on the set of triplication tables.

\begin{definition}
\label{def:equiv3tables}
Two TTs of the same order
are {\em equivalent}\ if one can be obtained from the other by (i) some permutation of pairs 
(necessarily within the same row) and (ii) simultaneously swapping all 3 pairs in some rows.
\end{definition}

We denote the equivalence class of a given triplication table $\Sigma_m$ by $[\Sigma_m]$.

Returning to Definition~\ref{def:3table-s} and Remark~\ref{rem:non-unique-sigma-m}, we observe that
to every strong starter $S$ in $\ZZ_{3m}$ there corresponds a unique equivalence class $[\Sigma_m^S]$
of TTs.

\begin{remark}
\label{rem:many-to-one}
If $S$ and $Q$ are two strong starters of order $3m$, the equality
$[\Sigma_m^S]=[\Sigma_m^Q]$ does not imply $S=Q$. 
In general, the correspondence $S\leftrightarrow [\Sigma_m^S]$ is of the type {\em many-to-one}.
\end{remark}

An approach to building a TT has been proposed in \cite{OSK25a}; it will be further developed and generalized
in Section~\ref{sec:construcion-tt}. 
Constructions to be described involve special columnar arrangements of pairs in TTs,
not shared by different tables of the same equivalent class. 

Some terminology related to columnar arrangements will be helpful in the sequel.

\begin{definition}
\label{def:pseudostarter}
A {\em pseudostarter}\ in $\ZZ_m$, $m=2q+1$, is a pairing $\{\{x_i,y_i\},\,i=1,\dots,q\}$
such that $x_i,y_i\in \ZZ_m$ and the differences $\pm(y_i-x_i)\pmod{m}$ comprise $\ZZ_m^*$. 
An {\em ordered pseudostarter}\ is a pairing $[(x_1,y_1),\dots,(x_q,y_q)]$
where $y_i-x_i\equiv \pm i\pmod{m}$, $i=1,\dots,q$.
\end{definition}

Every starter is a pseudostarter, but the opposite is not true. In a pseudostarter, it is not required
that the pairs $\{x_i,y_i\}$ form a partition of $\ZZ_m^*$.  Some elements can be included more
than once and some may be missing. The value $0$ is not forbidden.

\begin{definition}
\label{def:3-balanced}
A triple of pseudostarters $T_1$, $T_2$, $T_3$ in $\ZZ_m$ is {\em balanced}\ 
if the multiset union $T_1\uplus T_2\uplus T_3$ 
contains $0$ exactly twice, one non-zero element once, and every other element of $\ZZ_m^*$ exactly three times. 
\end{definition}

\begin{proposition}
\label{lem:3-balanced}
The three columns of the regular part of a triplication table $\Sigma_m$ 
form a balanced triple of pseudostarters in $\ZZ_m$. 
\end{proposition}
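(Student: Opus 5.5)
The plan is to read off both requirements of the statement directly from Definition~\ref{def:3table}: the pseudostarter property of each column, and the balance condition of Definition~\ref{def:3-balanced} for the triple.

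\emph{Each column is an ordered pseudostarter.} Fix a column index $c\in\{0,1,2\}$. For each regular row $d=1,\dots,q$, take the pair of $\Sigma_m$ sitting in row $d$ and column $c$; in the indexing of Definition~\ref{def:3table} this is $(u_{3d-2+c},v_{3d-2+c})$. By \eqref{rowdif3table}, its directed difference is $\hat d\equiv\pm d\pmod m$. So the $q$ pairs of column $c$, listed by row, satisfy $y_d-x_d\equiv\pm d$, which is exactly the ordered pseudostarter condition of Definition~\ref{def:pseudostarter}. Since $m=2q+1$, the values $\pm1,\dots,\pm q$ are the $2q$ distinct elements of $\ZZ_m^*$. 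Hence the differences $\pm(y_d-x_d)$ exhaust $\ZZ_m^*$, and the unordered version of the column is a pseudostarter as well. Nothing beyond property (ii) of Definition~\ref{def:3table} is needed here, since pseudostarters impose no partition condition.

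\emph{Balance.} The multiset union $T_1\uplus T_2\uplus T_3$ of the three columns is the multiset of entries of the regular part. That is the multiset of all entries of $\Sigma_m$ with one copy of the special pair $(t,t)$ removed, i.e.\ with two copies of $t$ removed.

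By property (i) of Definition~\ref{def:3table}, each nonzero $x$ occurs three times in $\Sigma_m$ and $0$ occurs twice. By property (ii), $t\neq0$. Removing $(t,t)$ therefore leaves:
\begin{itemize}
\item $0$ exactly twice;
\item $t$ exactly $3-2=1$ time;
\item every other element of $\ZZ_m^*$ exactly three times.
\end{itemize}
This is precisely Definition~\ref{def:3-balanced}, with $t$ as the distinguished non-zero element.

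\emph{Main obstacle.} There is no substantive obstacle. The only points needing care are bookkeeping ones. First, column membership must match the row and difference structure; this holds by the layout convention that row $d$ holds the three pairs with difference $\pm d$. Second, one must check that $t\neq 0$, which is what makes the singly occurring element nonzero as Definition~\ref{def:3-balanced} requires. Properties (iii) and (iv) of Definition~\ref{def:3table} are not used.
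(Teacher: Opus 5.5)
Your proposal is correct and follows the same route as the paper. Each column is an ordered pseudostarter by the row-difference condition (ii), and balance follows from (i) once the two copies of the key $t\neq 0$ in the special row are removed. You simply spell out the bookkeeping that the paper compresses into a one-line comparison of definitions.
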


\begin{proof} 
The statement follows from comparison of Definitions \ref{def:3table}, \ref{def:pseudostarter}, and \ref{def:3-balanced}.
The key value $\akey$ is contained twice in the special row. By item (i) of Definition~\ref{def:3table}, 
it must occur exactly once in the regular part of the table.
\end{proof}

\subsection{Objects associated with a triplication table $\Sigma_m$}
\label{ssec:3table-assoc}

Let us introduce some notation and auxiliary objects associated with $\Sigma_m$. 
In the sequel we label the pairs and their entries in $\Sigma_m$ as follows: the pair in the special 0-th row is $(u_0,v_0)$
(so $u_0=v_0=\akey$); the pairs in a regular $k$-th row ($k=1,\dots,q$) are: $(u_{3k-2},v_{3k-2})$, $(u_{3k-1},v_{3k-1})$, $(u_{3k},v_{3k})$.

We will use two-index notation $\ab{i,\ell}$ to indicate positions of the entries (components) in $\Sigma_m$. Here $i\in\{0,\dots,3q\}$ is the index of the cell containing the current entry; $\ell=0$ or $1$: the index $0$ points to $u_i$ and $1$ to $v_i$.  

Residues modulo $m$ are treated as colors.
Monochrome sets are the sets of positions of  entries in the triplication table with equal values (colors).

\begin{definition}
\label{def:cset}
The {\em monochrome set $M_c$ of color $c\in\{0,\dots,m-1\}$}\ is the set of double indices $\ab{i,\ell}$
such that the corresponding entry of the table $\Sigma_m$ has value $c$.
\end{definition}

\begin{proposition}
\label{prop:monoset}
In a triplication table, the sizes of the monochrome sets satisfy:
 $|M_0|=2$ and $|M_c|=3$ if  $c\neq 0$. 
\end{proposition}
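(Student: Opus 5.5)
This is a direct translation of item~(i) of Definition~\ref{def:3table} into the language of monochrome sets, so the plan is to set up a bijection and then read off the counts.

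First I would fix the interpretation of ``an element $x\in\ZZ_m$ is contained in $\Sigma_m$ $k$ times.'' It means that $x$ has multiplicity $k$ in the multiset union of all $2(3q+1)$ entries $u_0,v_0,\dots,u_{3q},v_{3q}$. Equivalently, $k$ is the number of double indices $\ab{i,\ell}$ with $i\in\{0,\dots,3q\}$ and $\ell\in\{0,1\}$ whose entry equals $x$. Each entry of the table sits at exactly one position $\ab{i,\ell}$. So for every color $c$, the map sending a position in $M_c$ to the corresponding occurrence of $c$ is a bijection between $M_c$ and the occurrences of $c$ in the multiset union. This gives $|M_c|$ equal to the multiplicity of $c$ in $\Sigma_m$.

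Second, I would apply Definition~\ref{def:3table}(i). It gives multiplicity $2$ for $c=0$ and multiplicity $3$ for $c\in\ZZ_m^*$, hence $|M_0|=2$ and $|M_c|=3$ for $c\neq0$.

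As a consistency check, the sizes should add up to the number of positions. Indeed $\sum_c |M_c| = 2+3(m-1) = 3m-1 = 6q+2 = 2(3q+1)$, which is exactly the number of positions, since the monochrome sets partition the set of positions.

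There is no real obstacle here. The only point needing care is that the special row contributes two positions, $\ab{0,0}$ and $\ab{0,1}$, both of color $\akey$. These must be counted separately, which the position-based definition of $M_c$ does automatically.
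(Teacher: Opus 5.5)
Your proposal is correct and matches the paper's proof, which likewise just reads the counts off item~(i) of Definition~\ref{def:3table}. Your explicit identification of the positions in $M_c$ with the occurrences of $c$, and the check $2+3(m-1)=2(3q+1)$, simply spell out that one-line argument.
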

\begin{proof}
This is due to item (i) of  Definition~\ref{def:3table}.
\end{proof}

For $s\in \{0,\dots,m-1\}$, 
denote by $W_s$ the set of indices $i$ of pairs $(u_i, v_i)$ in $\Sigma_m$ such that $u_i+v_i\equiv s \!\pmod{m}$.

\begin{proposition}
\label{prop:wset}
In a triplication table, the sizes  of sets $W_s$ satisfy:
 $|W_0|\leq 2$ and $|W_s|\leq 3$ if $s\neq 0$.
\end{proposition}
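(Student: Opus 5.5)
This is a direct unpacking of the definitions, in the same way that Proposition~\ref{prop:monoset} was read off from item (i) of Definition~\ref{def:3table}. Here we use item (iii) of that definition instead.

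First, by construction, $W_s=\{i\in\{0,\dots,3q\} : u_i+v_i\equiv s \pmod m\}$. So $|W_s|$ is exactly the number of pairs $(u_i,v_i)$ in the dataset $\Sigma_m$ whose sum is $s$ modulo $m$. Pairs are counted with their position index $i$, so two pairs at different positions are counted separately.

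Second, item (iii) of Definition~\ref{def:3table} says that, for $s\neq 0$, at most three pairs of $\Sigma_m$ have sum $s$ modulo $m$, and for $s=0$ at most two pairs do. Substituting $|W_s|$ for this count gives $|W_0|\le 2$ and $|W_s|\le 3$ for $s\ne 0$.

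The one point to be careful about is whether ``pairs'' in item (iii) means positions in the dataset or distinct pairs. By item (iv), no two pairs of $\Sigma_m$ are identical, so pairs at different positions $i$ are distinct pairs. Hence the two readings coincide, and counting indices $i$ is the same as counting pairs. No further obstacle is expected.
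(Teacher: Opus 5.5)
Your proposal is correct and matches the paper's proof: the paper likewise reads the bounds off item (iii) of Definition~\ref{def:3table}, since $|W_s|$ is by definition the number of pairs of $\Sigma_m$ with sum $s$ modulo $m$. Your appeal to item (iv), to reconcile counting positions with counting distinct pairs, is a harmless extra safeguard and does not change the argument.
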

\begin{proof}
This is due to item (iii) of Definition~\ref{def:3table}.
\end{proof}


\begin{definition}
\label{def:wset}
The set $W_s$ is called a {\em weak set}\ (with sum $s$) if either $s=0$ or $|W_s|>1$.

The sets $\Wp_s=\{(u_i,v_i), i\in W_s\ \text{a weak set}\}$ are called {\em weak pair sets}\ with sum $s$.

Pairs that are not weak are called {\em strong}.
In other words, a pair is strong if its sum is unique and nonzero.
\end{definition}

\section{The Modular Sudoku Problem}
\label{sec:Sudoku}

This section is concerned with tables denoted $\Sigg_r$ and $\Sigg_3$ in Fig.~\ref{fig:3schemes}.

\subsection{Discriminating 3-to-1 modular reduction $\ZZ_{3m}\to \ZZ_m$}
\label{ssec:discr}

The map $x\mapsto x\mmod m$, $\ZZ_{3m}\to\ZZ_m$, is $3$-to-$1$. Hence, knowing $u=x\pmod{m}$,
in order to recover $x\in\ZZ_{3m}$, we need a way to make a choice between 3 candidate values of $x$.
This can be done uniquely if to every $x\in\ZZ_{3m}$ we put in correspondence not just $u$ but a pair $(u,U)$,
where $u=x\pmod{m}$ and $U$ is a {\em discriminator} --- a numerical parameter that  takes
different values for $x=u$, $x=u+m$, and $x=u+2m$.
 
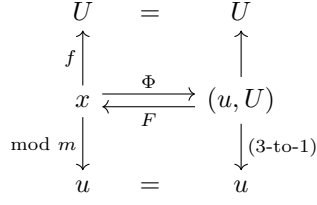
\begin{figure}
$$
 \begin{tikzcd}[column sep=tiny]
U 
 &[0.3em]=&[0.3em]
 U
 \\
 x\ar[d, "\mmod m"'] \ar[rr, shift left, "\Phi"] \ar[u, swap, "f"']
 &&
 (u,U)\ar[d, swap, "\text{(3-to-1)}"'] \ar[ll, shift left, "F"] \ar[u, "{}"']
  \\
u 
 &[0.3em]=&[0.3em]
 u
 \end{tikzcd}
$$
\caption{A discrimination scenario}
\label{diagram:discr-scenario}
\end{figure}
 
Let us encode a number $x\in \ZZ_{3m}$ by a pair 
$
(u,U)\in\ZZ_m\times\Zset,
$
where 
\begin{equation}
\label{u-proj}
u=x\mmod{m}\quad  \text{and} \quad U=f(x), 
\end{equation}
$\Zset$ is a finite set  of cardinality $\geq 3$,
and $f:\ZZ_{3m}\to\Zset$ is the {\em discriminating function}, such that
the map $\Phi:\,x\mapsto(u,U)$ is one-to-one.

Let $\Omega\subset \ZZ_m\times\Zset$ be the range of the map $\Phi$.
Since $\Phi$ is one-to-one, there exists the inverse (decoding) map $F:\Omega\to \ZZ_{3m}$, which recovers $x$ given $(u,U)$. 

This abstract scheme, visualized in Fig.~\ref{diagram:discr-scenario}, will be called a {\em discrimination scenario} (or just
``scenario'' for short). 

For reference, let us summarize the discriminating property of the correspondence $x\leftrightarrow (u,U)$ in a
 formal proposition, which is in fact a tautology.
  
 \begin{proposition}
 \label{prop:xy}
 Let $x, y \in \ZZ_{3m}$ and  $\Phi(x)=(u,U)$, $\Phi(y)=(u,V)$, where $\Phi: \ZZ_{3m}\to\ZZ_m\times \Zset$
 is one-to-one. Then $x\ne y$ iff $U\ne V$.
\end{proposition}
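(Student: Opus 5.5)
The claim is that $x\ne y$ holds exactly when $U\ne V$. The plan is to use only two facts: $\Phi$ is a well-defined function, and $\Phi$ is injective. The hypothesis that the first components of $\Phi(x)$ and $\Phi(y)$ coincide (both equal $u$) is what lets the discriminator carry all the information. I will prove the two directions separately, each by contraposition.

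For ``$x\ne y \Rightarrow U\ne V$'', suppose $U=V$. Then $\Phi(x)=(u,U)=(u,V)=\Phi(y)$. Injectivity of $\Phi$ gives $x=y$.

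For ``$U\ne V\Rightarrow x\ne y$'', suppose $x=y$. Since $\Phi$ is a function, $\Phi(x)=\Phi(y)$. Comparing second components gives $U=V$; equivalently, $U=f(x)=f(y)=V$ by \eqref{u-proj}.

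There is no real obstacle here, since the statement is essentially a tautology, as the paper itself remarks. The only point needing care is that the equal first components are used in the first direction. Without $x\mmod m=y\mmod m$, distinct $x,y$ could share a discriminator value, so the equivalence would fail.
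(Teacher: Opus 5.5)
Your proof is correct and is essentially the paper's reasoning: the paper gives no separate argument and simply calls the statement a tautology. Your two contrapositive directions, using injectivity of $\Phi$ for one and the fact that $\Phi$ is a function for the other, are exactly the content of that remark.
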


In practice, we will use two particular scenarios,
which we will label as ``Mod" and ``Carry" and 
subscript the functions $f$, $\Phi$, and $F$ accordingly.

\begin{itemize}
\item
{\em Mod scenario}: assuming $m=3^\nu p$, $\nu\ge 0$, and $3\notdiv p$,
\begin{equation}
  \label{mod-sc}
f_{\rm Mod}(x)= x \mmod {3^{\nu+1}}.
\end{equation}
Here $\Zset=\ZZ_{3^{\nu+1}}$, $\Omega=\left\{(u,U)\in\ZZ_m\times\Zset\,:\; 
U\in\{u,\,u+3^{\nu},\,u+2\cdot 3^{\nu}\}\right\}$
(with addition modulo $3^{\nu+1}$).
 
 \item
{\em Carry scenario}: regardless of divisibility of $m$ by $3$,
   \begin{equation}
  \label{carry-sc}
  f_{\rm Carry}(x)=\lfloor x/m\rfloor.
  \end{equation}
Here $\Zset=\ZZ_3$ and $\Omega=\ZZ_m\times\Zset$.  
 \end{itemize} 
  
  In either scenario, given $x\in \ZZ_{3m}$, the pair $(u,U)$  (\ref{u-proj}) is found by one of the formulas \eqref{mod-sc} or \eqref{carry-sc}. 
Further details, in particular, the inverse maps $F:(u,U)\to x$, are treated in Sections~\ref{sec:mod}
and \ref{sec:carry} (where the nickname ``Carry'' is clarified), respectively.
  
 \begin{example}
Table~\ref{table:mod-discr} illustrates discrimination in the Mod scenario. It displays, for each 
 $u\in \ZZ_m$, the corresponding values of $x\in \ZZ_{3m}$ and the discriminators $U=f_{\rm Mod}(x) $,
 see Eq.~\eqref{mod-sc},
 in the cases $m=7=3^0\cdot 7$ (left) and $m=15=3^1\cdot 5$ (right). 
 
In the Carry scenario, each cell in the ``$U$'' column of both tables,
calculated according to Eq.~\eqref{carry-sc}, would consist of values $0,1,2$ in that order. 

 \begin{table}
 \centerline{
 \begin{tabular}{|c|c|c|}
  \hline 
 $u$ & $x\in\ZZ_{21}$ & $U$\\
 \hline 
 0& 0, 7, 14 & 0, 1, 2\\
 \hline 
 1& 1, 8, 15 & 1, 2, 0\\
  \hline 
 2& 2, 9, 16 & 2, 0 ,1\\
 \hline 
 3& 3, 10, 17 & 0, 1 ,2\\
\hline 
 4& 4, 11, 18 & 1, 2, 0\\
  \hline 
 5& 5, 12, 19 & 2, 0 ,1\\
  \hline 
 6& 6, 13, 20 & 0, 1, 2\\
  \hline 
 \end{tabular}
 \hspace{3em}
 \begin{tabular}{|c|c|c|}
  \hline 
 $u$ & $x\in\ZZ_{45}$ & $U$\\
 \hline 
 0& 0, 15, 30 & 0, 6, 3\\
 \hline 
 1& 1, 16, 31 & 1, 7, 4\\
  \hline 
 2& 2, 17, 32 & 2, 8, 5\\
 \hline 
 3& 3, 18, 33 & 3, 0, 6\\
  \hline 
 4& 4, 19, 34 & 4, 1 ,7\\
 \hline 
 \dots &\dots& \dots\\
 \hline 
 14& 14, 29, 44 & 5, 2, 8\\
  \hline 
 \end{tabular}
 } 
 \caption{Correspondences $x\leftrightarrow (u,U)$ in Mod scenario for $m=7$ (left) and $m=15$ (right)}
 \label{table:mod-discr}
 \end{table}
 
  \end{example} 
 
 
\subsection{``Arithmetic'' on discriminators}  
\label{ssec:squareops} 
 
 In the general setting described in Sec.~\ref{ssec:discr}
let us introduce symbolic notation for  ``difference'' $U \myminus V$ 
 and ``sum" $U \myplus V$  in $\Zset$ induced from the difference and sum of two elements  in $\ZZ_{3m}$ as follows:
 $$
 f(x)\myminus f(y)=f( x-y), 
\qquad
 f(x)\myplus f(y)=f( x+y),
 $$
 where $+$ and $-$ in the right-hand sides are the usual arithmetic operations modulo $3m$.

Suppose $(u,U)$ and $(v,V)$ are in $\Omega$. The above equations can be written as
 \begin{equation} 
\begin{array}{rcl}
\label{myplusmin}
 U\myminus V&=&f( F(u,U)-F(v,V)), \\[1ex]
 U\myplus V&=&f( F(u,U)+F(v,V)).
 \end{array}
 \end{equation} 
These formulas may, but in general do not, define
two-argument functions from a Cartesian product of a certain set (say, $\Zset$) into itself. This is why we put the word
``arithmetic'' in the heading of this subsection in quotation marks.  
Rigorous interpretation of the formulas \eqref{myplusmin} is this:
{\em the left-hand sides are understood just as abbreviations of the right-hand sides}.
In general, every time the ``operations''  $\myplus$, $\myminus$ are encountered, the values $u$ and $v$ are to be given along with $U$ and $V$. With this understanding, as we will see in the next subsection,  this notation will be very helpful. 
 
 In the Mod scenario, with $U=f_{\rm Mod} (x)$, $V=f_{\rm Mod} (y)$, formulas (\ref{myplusmin}) become   \begin{equation}
 \label{omop}
 U\myminus V= (U-V) \mmod {3^{\nu+1}}\quad  {\rm and}\quad U \myplus V= (U+V) \mmod {3^{\nu+1}}.
 \end{equation}
 They agree with usual arithmetic operations modulo $3^{\nu+1}$. In this case we may think of $\myplus$, $\myminus$
 as of the correctly defined standard binary operations in $\Zset=\ZZ_{3^{\nu+1}}$.
 
 In the Carry scenario, the situation is more complicated: in order to calculate $U \myminus V$ and $U \myplus V$ by (\ref{myplusmin})  
 we must know their counterparts $u$ and $v$ in the pairs that belong to $\Omega$.
 The explicit formulas are derived in Section~\ref{sec:carry}, see Eq.~\eqref{omop1}. 

\smallskip
The following proposition is an extension of Proposition~\ref{prop:xy}. 
It will be used to discriminate 
results of arithmetical operations modulo $3m$ if the results modulo $m$ coincide.

 \begin{proposition}
 \label{prop:xyz}
 Suppose, for $i=1,2$, $x_i, y_i \in \ZZ_{3m}$,  $\Phi(x_i)=(u_i,U_i)$, and $\Phi(y_i)=(v_i,V_i)$.
 
 {\rm (a)} 
 Suppose that $u_1+v_1\equiv u_2+v_2 \pmod{m}$.
Then
 $x_1+y_1\ne x_2+y_2$  in $\ZZ_{3m}$ iff  $\ U_1\myplus V_1\ne U_2\myplus V_2$ in $\Zset$.
 
 {\rm (b)} 
  Suppose that $u_1-v_1\equiv u_2-v_2 \pmod{m}$.
Then
$x_1-y_1\ne x_2-y_2$ in $\ZZ_{3m}$ iff $\ U_1\myminus V_1\ne U_2\myminus V_2$ in $\Zset$.
 
 \end{proposition}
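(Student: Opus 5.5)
The plan is to reduce both parts to Proposition~\ref{prop:xy}, applied not to the $x_i,y_i$ themselves but to the sums (respectively differences) computed in $\ZZ_{3m}$. For part (a), set $z_1=x_1+y_1$ and $z_2=x_2+y_2$, with arithmetic modulo $3m$. Since reduction modulo $m$ is a ring homomorphism $\ZZ_{3m}\to\ZZ_m$ (because $m$ divides $3m$), we have $z_i\mmod m = (u_i+v_i)\mmod m$. The hypothesis then says that $z_1$ and $z_2$ have the same residue $w$ modulo $m$.

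Next we identify the discriminators of $z_1,z_2$. The definition of $\myplus$ in \eqref{myplusmin} gives $f(z_i)=U_i\myplus V_i$, because $F(u_i,U_i)=x_i$ and $F(v_i,V_i)=y_i$ by the invertibility of $\Phi$. Hence $\Phi(z_i)=(w,\,U_i\myplus V_i)$. The two encodings share the first component $w$, so Proposition~\ref{prop:xy} applies directly: $z_1\ne z_2$ holds iff $U_1\myplus V_1\ne U_2\myplus V_2$. This is exactly statement (a).

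Part (b) is proved in the same way with $z_i=x_i-y_i$, using the second line of \eqref{myplusmin} in place of the first, and the hypothesis $u_1-v_1\equiv u_2-v_2\pmod m$.

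The only point needing care is the interpretive one stressed in Section~\ref{ssec:squareops}. The symbols $U_i\myplus V_i$ are not values of a binary operation on $\Zset$; they are abbreviations for $f(F(u_i,U_i)+F(v_i,V_i))$. So we must check that the arguments fed to $F$ really are $x_i$ and $y_i$, which holds since $(u_i,U_i)=\Phi(x_i)\in\Omega$. Once this is noted, the argument is a tautology, just as Proposition~\ref{prop:xy} is. I expect no real obstacle beyond stating this identification explicitly.
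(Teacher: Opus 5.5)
Your proposal is correct and matches the paper's proof: set $z_i=x_i+y_i$ (resp.\ $z_i=x_i-y_i$), read off from \eqref{myplusmin} that $\Phi(z_i)=(w_i,\,U_i\myplus V_i)$ with $w_1=w_2$, and apply Proposition~\ref{prop:xy}. You also state explicitly that reduction modulo $m$ is a homomorphism on $\ZZ_{3m}$, a step the paper leaves implicit.
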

 \begin{proof}
(a)  Introduce $z_i=x_i+y_i$, $w_i=u_i+v_i$, $W_i=U_i\myplus V_i$, for $i=1,2$. 
 The claim becomes:
 ``Let $w_1=w_2$. Then $z_1\ne z_2$ iff $W_1\ne W_2$''. Since $\Phi(z_i)= (w_i, W_i)$ by (\ref{myplusmin}), the claim is true by Proposition~\ref{prop:xy}.
 
 (b) We use the same argument as in  (a) with $z_i=x_i-y_i$, $w_i=u_i-v_i$, $W_i=U_i\myminus V_i$, for $i=1,2$. 
 \end{proof}
 
\subsection{A table $\Sigg^S$ 
consisting of discriminating values}
\label{ssec:sigg-table}
  
In Sec.~\ref{ssec:3table} we explained the left arrow in the schemes shown in Fig~\ref{fig:3schemes} of the top row and either column.
Here we will similarly explain the right arrow and its target, the table $\Sigg^S_*$.

Let $S$ be a strong starter of order $3m$, $m=2q+1$.  
In an abstract discrimination scenario, along with the table $\Sigma^S_m=\{(u_i,v_i)\}_{i=0}^{3q}$, let us consider a table $\Sigg^S=\{(U_i,V_i)\}_{i=0}^{3q}$ of the same layout and  filled with ordered pairs of elements of $\Zset$,
the target set of the function $f$ in \eqref{u-proj}.

Thus the pairs $(U,V)$ of $\Sigg_{}^S$ 
obtained 
from the pairs $(x,y)$ of a strong starter $S$ 
are aligned with pairs $(u,v)$ of $\Sigma_m^S$ in the following way  (for $0\le i\le 3q$):
\begin{equation}
\label{tables-from-S} 
  \ba{lll}
 (u_i, U_i)&=\;(x_i\!\mmod m,\; f(x_i)) &=\; \Phi(x_i),
 \\[0.5ex]
 (v_i, V_i)&=\;(y_i\!\mmod m,\; f(y_i)) &=\; \Phi(y_i).
 \ea
\end{equation}
This fixed alignment makes the values  $U_i\myminus V_i$, $U_i\myplus V_i$ 
unambiguously defined by Eqs.~\eqref{myplusmin} with $u=u_i$, $v=v_i$.
 
Keeping the description still abstract, but having our two concrete scenarios in mind, we identify $\Zset$
with number set $\ZZ_r=\{0,1,\dots,r-1\}$. In the Mod scenario with $m=3^{\nu}p$, $3\notdiv p$,
 we have $r=3^{\nu+1}$, while in the Carry scenario $r=3$ for any $m$.

The additional condition, where $0_a$ denotes zero element of $\ZZ_a$, 
\begin{equation}
\label{F00}
f(0_{3m})=0_r
\end{equation}
will be henceforth assumed. 
It implies that $\Phi(0_{3m})=(0_m,0_r)$ and $F(0_m,0_r)=0_{3m}$. 
In particular, the condition \eqref{F00} is fulfilled in the scenarios Mod and Carry.
 
As the set $\Zset$ is agreed upon, from now on we will write $\Sigg_{r}^S$ instead of $\Sigg^S$. 
By construction, the table $\Sigg_{r}^S$ has the following properties,
or, as we prefer to say, obeys the following constraints.
 (Double parentheses are used to distinguish between constraints and equation numbers in references.)
 
\begin{enumerate}
\item[\consid{0}]
{\em Range Constraints}.
 Variables $U_i, V_i$ take values in the set $\{0,1, \dots, r-1\}$.

 \item[\consid{1}]
{\em Row Constraints}.
The ``differences" $D_i=U_i\myminus V_i$  in each regular row of the table are distinct. 
The ``difference" $D_0=U_0\myminus V_0$ 
is nonzero (i.e. $D_0 \in \{1,\dots, r-1\}$). 

\item[\consid{2}]
{\em Weak Set Constraints}.

Recall that weak sets associated with table $\Sigma_m^S$ are defined in Definition~\ref{def:wset}.

For every weak set $W_s$:

\begin{enumerate}
\item[(a)] In the case $s\neq 0$, 
the ``sums"  
$\{U_i\myplus V_i,\, i\in W_s\}$ are distinct. 

\item[(b)] In the case $s=0$, 
the ``sums" $\{U_i\myplus V_i,\ i\in W_0\}$ are distinct and nonzero. 
 
\end{enumerate}

\item[\consid{3}]
{\em Color Constraints}.

Recall that monochrome sets $M_c$ associated with table $\Sigma_m^S$ are defined in Definition~\ref{def:cset}.

\begin{enumerate}

\item[(a)] In the case $c\neq 0$, 
the values $U_i$ or $V_i$ in the positions
corresponding to the double indices $\ab{i,\ell}\in M_c$ are distinct. 

\item[(b)] In the case $c=0$, 
the values $U_i$ or $V_i$ in the positions
corresponding to the double indices $\ab{i,\ell}\in M_c$ are distinct and nonzero.

\end{enumerate}

\item[\consid{4}] 
{\em Consistency Constraints}.
The pairs $(u_i,U_i)$ and $(v_i,V_i)$ lie in the range $\Omega$ of the map $\Phi$ defining the
discrimination scenario currently in use.

\end{enumerate}

The constraints ((0))--((3)) directly correspond to the constraints 1--4 in \cite[Sec.~6]{OSK25a},
where, in our present terminology, the Mod scenario was used in the case $\nu=0$ (i.e. $m$ coprime with $3$).
In that case, the constraints ((4)) are vacuous. They will also be vacuous in the Carry scenario.

In the Mod scenario with $\nu>0$
the constraints ((4)) take the form of {\em Modular Compatibility
Conditions}:
$U_i\equiv u_i\!\!\pmod{3^{\nu}}$ and $V_i\equiv v_i\!\!\pmod{3^{\nu}}$ for $i\in\{0,1,\dots,3q\}$,
cf.~the description of the set $\Omega$ next to Eq.~\ref{mod-sc}.

\smallskip
Since the map $\Phi$ is invertible,
and due to the Consistency Constraints, the strong starter $S$ used to construct the tables $\Sigma_m^S$ and $\Sigg^S_{r}$ can be uniquely recovered from these two tables: $x_i=F(u_i,U_i)$, $y_i=F(v_i,V_i)$, $i\in\{0,1,\dots,3q\}$.

\subsection{Congruous table, Modular Sudoku Problem, and the recovery of a strong starter} 
\label{ssec:sigma-to-starter}

While the departure point of the preceding discussion had been a strong starter $S$ in $\ZZ_{3m}$,
the properties (constraints) ((0))--((4)) did not mention $S$ explicitly; implicitly $S$ was still present via the TT  $\Sigma^S_m$. 
Now we want to completely eliminate $S$ from the picture.
The new point of departure will be a triplication table {\em per se}, regardless of its origin.
 
 \begin{definition} 
\label{def:mod-Sudoku}
Let there be given:

(i) a triplication table $\Sigma_m$ of order $m$, see Definition~\ref{def:3table},  

(ii) a discrimination scenario defined by a set $\Zset=\ZZ_r$ 
and a function $f:\,\ZZ_{3m}\to \Zset$
such that the map $\Phi:\,x\mapsto(u,U)$ defined by Eq.~\eqref{u-proj} is one-to-one.

Let $\{W_s\}$ and $\{M_c\}$ be the collections of weak, resp., monochrome sets associated with $\Sigma_m$
as defined in Sec.~\ref{ssec:3table-assoc} and operations $\myplus$, $\myminus$ be defined by 
\eqref{myplusmin} in the chosen scenario.

\smallskip
A table $\Sigg_r$ satisfying the constraints {\rm ((0))--((4))} stated above is said to be
{\em congruous with}\ TT $\Sigma_m$ (in the given scenario).

\smallskip
The following problem is called the {\em Modular Sudoku Problem}\
(related to $\Sigma_m$ in the given scenario):

\smallskip
{\em Find a table $\Sigg_r$  congruous with $\Sigma_m$.}
\end{definition}

 \begin{definition} 
\label{def:mod-Sudoku-starter}
Suppose $\Sigma_m$ is a 
TT
of order $m=2q+1$ and $\Sigg_r$ is a table congruous with $\Sigma_m$
in the given discrimination scenario with mutually inverse maps $\Phi$ and $F$. 
For aligned pairs $(u_i,v_i)$ of $\Sigma_m$ and $(U_i,V_i)$ of $\Sigg_r$ put 
\begin{equation}
\label{recover-starter}
x_i=F(u_i,U_i), 
\quad
y_i=F(v_i,V_i)
\quad\text{for $i=0,\dots, 3q$}.
\end{equation}
The right-hand sides are defined due to the Consistency Constraints. 

The so obtained pairing $[(x_i,y_i)]_{i=0}^{3q}$ is said to be {\em congruous with}\ 
TT
$\Sigma_m$
in the given scenario. To add specificity, the pairing $[(x_i,y_i)]_{i=0}^{3q}$ is said to be {\em congruous with $\Sigma_m$  via}\ 
$\, \Sigg_r$. Referring to Eqs.~\eqref{recover-starter}, we also say that  $\Sigma_m$ and $\Sigg_r$ {\em yield}\
the pairing $S$.
\end{definition}

The analysis in Sec.~\ref{ssec:sigg-table} can be summarized in the following formal statement.

\begin{proposition}
\label{prop:constraints-necessary}
Suppose $S$ is a strong starter of order $3m$,
$\Sigma_m^S$ is the triplication table induced by $S$ (see Definition~\ref{def:3table-s}),
and $\Sigg_r^S$ is the table defined in Sec.~\ref{ssec:sigg-table} in 
the given discrimination scenario (with $\Zset=\ZZ_r$).
Then

{\rm(i)} the table $\Sigg_r^S$ is congruous with $\Sigma_m^S$;

{\rm(ii)} the original starter $S$ is congruous with $\Sigma_m^S$ via $\Sigg_r^S$.
\end{proposition}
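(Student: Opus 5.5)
The plan is to check the constraints ((0))--((4)) one by one for the table $\Sigg_r^S$ defined by \eqref{tables-from-S}, using only the three defining properties of the strong starter $S$ and Propositions~\ref{prop:xy} and \ref{prop:xyz}. Part (ii) will then follow at once from the invertibility of $\Phi$.

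Constraint ((0)) holds because $U_i=f(x_i)\in\Zset=\ZZ_r$. Constraint ((4)) holds because $(u_i,U_i)=\Phi(x_i)$ and $(v_i,V_i)=\Phi(y_i)$ lie in the range $\Omega$ by definition.

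For ((1)), take two pairs $i\ne j$ in the same regular row. By \eqref{rowdif3table} we have $u_i-v_i\equiv u_j-v_j \pmod m$. In $S$ the directed differences $x_i-y_i$ and $x_j-y_j$ are distinct, since property (ii) of a strong starter says that the $2k$ values $\pm(x-y)$ exhaust $\ZZ_{3m}^*$ and so are pairwise distinct. Proposition~\ref{prop:xyz}(b) then gives $D_i\ne D_j$. For the special row, $x_0-y_0\ne 0$ while $u_0-v_0=0$. Comparing $x_0-y_0$ with $0_{3m}=F(0_m,0_r)$ via Proposition~\ref{prop:xy} and \eqref{F00} gives $f(x_0-y_0)\ne 0_r$, that is, $D_0\ne 0$.

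For ((2)), two indices in a weak set $W_s$ have equal sums modulo $m$. Property (iii) of $S$ makes $x_i+y_i\ne x_j+y_j$, so Proposition~\ref{prop:xyz}(a) gives distinct $\myplus$-sums. When $s=0$, we additionally compare $x_i+y_i\ne 0$ with $0_{3m}$ using \eqref{F00} and Proposition~\ref{prop:xy} to get nonzero sums.

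For ((3)), entries in one monochrome set $M_c$ come from distinct elements of $\ZZ_{3m}^*$, because $S$ partitions $\ZZ_{3m}^*$. Their images have the same first coordinate $c$, so Proposition~\ref{prop:xy} makes the discriminators distinct. For $c=0$ the entries are nonzero elements with residue $0$, and comparison with $0_{3m}$ under \eqref{F00} makes them nonzero.

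The one point that needs care is that $\Sigma_m^S$ comes from ordering the pairs of $S$ (Definition~\ref{def:3table-s}), possibly with a swap of $x$ and $y$ inside a pair. So the signs of differences must be tracked consistently: $U_i\myminus V_i$ is defined via the chosen orientation $(x_i,y_i)$, and pairwise distinctness of all directed differences is exactly what is needed. Granting that, (i) is done. For (ii), $F$ inverts $\Phi$ on $\Omega$, so $F(u_i,U_i)=x_i$ and $F(v_i,V_i)=y_i$, and the pairing of Definition~\ref{def:mod-Sudoku-starter} is $S$ itself.
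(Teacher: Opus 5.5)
Your proof is correct and is essentially the paper's own argument. The paper says constraints ((0))--((4)) hold ``by construction'' in Sec.~\ref{ssec:sigg-table}; that amounts to your check using the strong-starter properties, the forward directions of Propositions~\ref{prop:xy} and \ref{prop:xyz}, and \eqref{F00}, with (ii) following from invertibility of $\Phi$. Your closing caveat about orientation needs no ``granting'': you already showed that all $2k$ values $\pm(x-y)$ are pairwise distinct, so the directed differences are distinct for any choice of orientation.
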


In this sense, the constraints ((0))--((4)) are necessary conditions for the tables 
$\Sigma_m$ and $\Sigg_r$
to yield a strong starter by means of Eqs.~\eqref{recover-starter}.

We do not claim that these conditions are sufficient for the existence of a $\Sigg_r$ congruous with $\Sigma_m$;
this is not even true --- see Section~\ref{sec:sudoku-solvability}.
But the next weaker statement already justifies the triplication idea.


\begin{theorem}
\label{thm:main}
Let $\Sigma_m=[(u_i,v_i)]_{i=0}^{3q}$ be a 
triplication table of order $m=2q+1$ (as described in Definition~\ref{def:3table}).
Fix a discrimination scenario with set $\Zset=\ZZ_r$, $r\ge 3$, 
and decoding function $F$.
Suppose $\Sigg_r$ is a table congruous with $\Sigma_m$ in the given scenario.
Then the pairing $S=[(x_i,y_i)]_{i=0}^{3q}$
defined by Eqs.~\eqref{recover-starter},
that is, the unique pairing congruous with $\Sigma_m$ via $\Sigg_r$,
is a strong starter%
\footnote{The resulting strong starter will be written as a set of {\em unordered}\ pairs $\{\{x_i,y_i\}\}$, while in the process of construction we always work with {\em ordered}\ pairing written as $[(x_i,y_i)]$.}
in $\ZZ_{3m}$.
\end{theorem}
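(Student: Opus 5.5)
We verify the three defining properties of a strong starter for $S=[(x_i,y_i)]_{i=0}^{3q}$, reducing each to a statement about $\Sigma_m$ (Definition~\ref{def:3table}) plus a discrimination statement about $\Sigg_r$ (Propositions~\ref{prop:xy} and~\ref{prop:xyz}). The Consistency Constraints ((4)) guarantee that all $x_i,y_i$ are well defined by \eqref{recover-starter} and satisfy $\Phi(x_i)=(u_i,U_i)$, $\Phi(y_i)=(v_i,V_i)$. The count is right from the start: there are $3q+1$ pairs, and $3m=6q+3$ gives $(3m-1)/2=3q+1$.

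\emph{Partition of $\ZZ_{3m}^*$.} First we show that the $6q+2$ entries $x_i,y_i$ are pairwise distinct and nonzero; since $|\ZZ_{3m}^*|=6q+2$, this gives a partition. Take two positions $\ab{i,\ell}\neq\ab{j,\ell'}$. If their $\Sigma_m$-entries differ mod $m$, the corresponding elements of $\ZZ_{3m}$ differ. If they agree, both positions lie in one monochrome set $M_c$, and the Color Constraints ((3)) make the discriminators distinct, so Proposition~\ref{prop:xy} gives distinct elements. For nonzeroness, an element equal to $0_{3m}$ would have $\Phi$-image $(0_m,0_r)$ by \eqref{F00}. Its position would then lie in $M_0$ with discriminator $0$, which ((3b)) forbids.

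\emph{Differences.} We need the $3q+1$ differences $\pm(y_i-x_i)$ to cover $\ZZ_{3m}^*$, that is, the $6q+2$ signed differences must be distinct and nonzero. Reducing mod $m$, row $0$ gives $0$ and row $d$ gives $\pm\hat d$, so distinct rows have differences that differ mod $m$, even up to sign. The one exception is the sign question within the special row, where $\pm 0$ coincide. That case needs $x_0-y_0\neq -(x_0-y_0)$, i.e. $2(x_0-y_0)\ne 0$. This holds because $3m$ is odd and $x_0\neq y_0$, which in turn follows from $D_0\neq0$ via Proposition~\ref{prop:xyz}(b) compared against the pair $(0,0)$, or directly from Proposition~\ref{prop:xy} since $u_0=v_0$.

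Within a regular row $d$, the three pairs have equal directed difference $\hat d$ mod $m$. The Row Constraints ((1)) make the $D_i=U_i\myminus V_i$ distinct, so Proposition~\ref{prop:xyz}(b) gives three distinct values $x_i-y_i$. Their negatives are $\equiv-\hat d\neq\hat d \pmod m$, so those cannot collide with the originals either. The nonzero condition on the regular rows is automatic since $\hat d\ne 0$.

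\emph{Sums.} We need the $3q+1$ sums $x_i+y_i$ to be distinct and nonzero. If two sums agree mod $m$, the indices lie in a common $W_s$, which is then weak because $|W_s|>1$. The Weak Set Constraints ((2)) with Proposition~\ref{prop:xyz}(a) then separate the sums in $\ZZ_{3m}$. For nonzeroness, suppose $x_i+y_i=0_{3m}$. Then $u_i+v_i\equiv0$, so $i\in W_0$, which is always weak. Moreover $\Phi(x_i+y_i)=(0_m,U_i\myplus V_i)$ by \eqref{myplusmin}, and \eqref{F00} would force $U_i\myplus V_i=0$, contradicting ((2b)).

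The main subtlety, though not really an obstacle, is the bookkeeping for the abstract operations $\myplus,\myminus$. They are only meaningful together with $u,v$, and one must check that Proposition~\ref{prop:xyz} applies exactly when the mod-$m$ sums or differences coincide. A second point is the treatment of the zero cases through \eqref{F00}. Everything else is a direct translation of Definition~\ref{def:3table} together with constraints ((1))--((4)).
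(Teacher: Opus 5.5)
Your proof is correct and follows the paper's route: ((3)) with Proposition~\ref{prop:xy} gives the partition, ((1)) with Proposition~\ref{prop:xyz}(b) gives the difference property, ((2)) with Proposition~\ref{prop:xyz}(a) gives strongness, and \eqref{F00} handles the zero cases --- you also spell out the counting, the special-row sign issue and the nonzeroness checks that the paper's three-line proof leaves implicit. One harmless slip: in a regular row it is $x_i-y_i\equiv-\hat d$ and the negatives $y_i-x_i\equiv\hat d \pmod m$, not the other way round, which does not affect the conclusion.
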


\begin{proof}
Since $F(0_m,0_r)=0_{3m}$ as follows from  assumption \eqref{F00},
the constraint ((3)) by Proposition~\ref{prop:xy} implies that $S$ is a partition of $\ZZ_{3m}^*$.
The constraint ((1)) by Proposition~\ref{prop:xyz} (b) implies that $S$ is a starter in $\ZZ_{3m}$. 
The constraint ((2)) by Proposition~\ref{prop:xyz} (a) implies that  $S$ is strong.
\end{proof}

We emphasize that the existence of a table $\Sigg_r$ congruous with given 
TT
$\Sigma_m$ in the given
scenario is not guaranteed. To find $\Sigg_r$ amounts to solving an MSP and we offer no
theoretical recipe for that.
 
 \subsection{Scenario-independence of the set of congruous starters}
\label{ssec:mod-carry}

We mentioned two concrete discrimination scenarios and there can conceivably be many more.
The natural question arises: 
{\em is any scenario better than others, either universally or for the particular given triplication table $\Sigma_m$?}
Let us understand ``better'' as ``yielding more solutions''. In particular, can it happen that for the same $\Sigma_m$
the Modular Sudoku Problem would have no solution in one scenario but at least one solution in another scenario?

The answer is: no.  In all valid scenarios, the MSP
and the recovery formulas \eqref{recover-starter}
supply the same set of strong starters.
 
\begin{notation}
\label{def:sudoku-solution-set}
Fix $m=2q+1$. Let $\cA$ be a discrimination scenario satisfying assumption \eqref{F00}.
For the given triplication table $\Sigma_m$, the set of (ordered) strong starters congruous with $\Sigma_m$
in scenario $\mathcal{A}$ is denoted $\Stset{\cA}{\Sigma_m}$.
\end{notation}  
  
 \begin{theorem}
\label{thm:scenario-independence}
Fix $m=2q+1$.
Let $\Sigma_m$ be a triplication table as defined in Section \ref{sec:3ply}.
Suppose $\mathcal{A}$ and $\mathcal{B}$ are two discrimination scenarios satisfying assumption \eqref{F00}.
Then $\Stset{\cA}{\Sigma_m}=\Stset{\cB}{\Sigma_m}$.
\end{theorem}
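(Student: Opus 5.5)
By symmetry between $\cA$ and $\cB$ it suffices to prove $\Stset{\cA}{\Sigma_m}\subseteq\Stset{\cB}{\Sigma_m}$. The idea is to reverse the recovery map. Take $S=[(x_i,y_i)]_{i=0}^{3q}\in\Stset{\cA}{\Sigma_m}$, obtained from some table $\Sigg^{\cA}$ congruous with $\Sigma_m$ in scenario $\cA$. By Theorem~\ref{thm:main}, $S$ is a strong starter in $\ZZ_{3m}$. Moreover, by construction $x_i\mmod m=u_i$ and $y_i\mmod m=v_i$ for every $i$, since $F_\cA$ inverts $\Phi_\cA$ and the first component of $\Phi_\cA$ is reduction mod $m$.

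Now define a candidate table in scenario $\cB$ by $U^{\cB}_i=f_\cB(x_i)$ and $V^{\cB}_i=f_\cB(y_i)$. Since $\Phi_\cB(x_i)=(u_i,U^{\cB}_i)$ and $\Phi_\cB(y_i)=(v_i,V^{\cB}_i)$ lie in $\Omega_\cB$, the constraints ((0)) and ((4)) hold automatically. Because $F_\cB\circ\Phi_\cB=\mathrm{id}$, the recovery formulas \eqref{recover-starter} in scenario $\cB$ return exactly $x_i,y_i$. So once this table is shown to be congruous with $\Sigma_m$ in $\cB$, the same ordered pairing $S$ lies in $\Stset{\cB}{\Sigma_m}$.

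It remains to verify ((1))--((3)) for the $\cB$-table. This is the same reasoning as in Proposition~\ref{prop:constraints-necessary}, except that we start from the given table $\Sigma_m$ rather than the induced $\Sigma_m^S$; the two coincide here as ordered datasets, since $(x_i\mmod m,\,y_i\mmod m)=(u_i,v_i)$.

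\emph{Row constraints ((1)).} Within a regular row all $v_i-u_i$ agree mod $m$. The starter property (ii) makes the corresponding differences $y_i-x_i$ distinct in $\ZZ_{3m}$. Hence the values $U_i\myminus V_i$ are distinct by Proposition~\ref{prop:xyz}(b), which applies in either orientation. For row $0$, $y_0-x_0\neq 0$ while $\Phi_\cB(0)=(0,0_r)$ by \eqref{F00}, so $D_0\ne 0$ by Proposition~\ref{prop:xy}.

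\emph{Weak set constraints ((2)).} For $i$ in a weak set $W_s$, the sums $x_i+y_i$ are distinct and nonzero in $\ZZ_{3m}$ by the strong property (iii). Proposition~\ref{prop:xyz}(a) then gives distinctness of the $U_i\myplus V_i$. For $s=0$, nonzero-ness follows from Proposition~\ref{prop:xy} together with $f_\cB(0)=0_r$.

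\emph{Color constraints ((3)).} Entries of the same color correspond to distinct elements of $\ZZ_{3m}^*$, because $S$ partitions $\ZZ_{3m}^*$. Proposition~\ref{prop:xy} therefore gives distinct discriminators, and for color $0$ these are nonzero since no $x_i$ equals $0$.

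The main obstacle is bookkeeping rather than depth. One must be careful that the $\myplus$ and $\myminus$ of scenario $\cB$ are evaluated with the aligned $u_i,v_i$ as required by \eqref{myplusmin}. One must also use \eqref{F00} for both scenarios, so that ``zero'' means the same thing after recovery and after re-encoding. With these in place the inclusion holds, and symmetry gives equality.
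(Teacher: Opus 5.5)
Your proof is correct and follows the paper's argument: take $S\in\Stset{\cA}{\Sigma_m}$, note that $\Sigma_m=\Sigma_m^S$, re-encode $S$ via $\Phi_\cB$, and observe that the resulting table is congruous with $\Sigma_m$ in scenario $\cB$ and returns $S$ under $F_\cB$. The only difference is that the paper cites Proposition~\ref{prop:constraints-necessary} for the constraint check, whereas you verify ((0))--((4)) directly via Propositions~\ref{prop:xy} and~\ref{prop:xyz}, which fills in what the paper treats as ``by construction''.
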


\begin{proof}
The roles of $\mathcal{A}$ and $\mathcal{B}$ are symmetric, so it suffices to prove that
$\Stset{\cA}{\Sigma_m}\subset\Stset{\cB}{\Sigma_m}$.

Suppose $S\in \Stset{\cA}{\Sigma_m}$.
Then $S$ is a strong starter and $\Sigma_m$ is induced by $S$ (see Definition~\ref{def:3table-s}):
$\Sigma_m=\Sigma_m^S$.

In scenario $\cB$, let $\Zset=\ZZ_r$  and $\Phi_{\cB}$ be the encoding function. 
Consider the table $\Sigg_r^S$ with components $U_i$, $V_i$
constructed from $S$ by means of Eqs.~\eqref{tables-from-S} where $\Phi=\Phi_{\cB}$.

By Proposition~\ref{prop:constraints-necessary}, 
$\Sigg_r^S$ is congruous with $\Sigma_m^S$ in scenario $\cB$ and 
$S$ is congruous with $\Sigma_m^S$ in scenario $\mathcal{B}$ via $\Sigg_r^S$.
Therefore $S\in \Stset{\cB}{\Sigma_m}$.
\end{proof}

Due to Theorem~\ref{thm:scenario-independence}, we can drop the subscript in the notation of the set of strong
starters obtainable by triplication from the given 
TT
and write $\Stset{}{\Sigma_m}$.

For ordered starters from $\Stset{}{\Sigma_m}$ the set of corresponding unordered starters depends only on the equivalence class $[\Sigma_m]$.

\begin{center}
*\,*\,*
\end{center}

Our practical goal is generation of strong starters $S$ in $\ZZ_{3m}$, so we will now turn to 
concrete implementation of the proposed abstract theory. 
In the next section we describe 
some explicit constructions of the triplication table $\Sigma_m$. 
Once
$\Sigma_m$ is 
generated, 
the construction of $S$ proceeds in three steps:

\smallskip  
 I. 
 Setting up the Modular Sudoku Problem.

\smallskip  
 II. 
 Solving the Modular Sudoku Problem, that is, finding $\Sigg_{r}$.

\smallskip  
 III. 
 Recovering the strong starter $S$ from $\Sigma_m$ and $\Sigg_{r}$.

\smallskip  
We provide details of steps I and III in the Mod and Carry scenarios in Sections \ref{sec:mod}  and \ref{sec:carry}, respectively. Specifically, we describe the inverse (decoding) map $F$ in both scenarios and 
reveal the operations $\myminus$ and $\myplus$ in the Carry scenario in a fully explicit form. 



\section{Explicit constructions of the triplication table}
\label{sec:construcion-tt}

In this section we explore some explicit constructions of the triplication table $\Sigma_m$ formally described by  Definition \ref{def:3table}. The construction  described in Sec.~\ref{ssec:basic-3} was proposed in the PhD thesis of Ogandzhanyants
and presented in \cite{OSK25a}. The idea of Sec.~\ref{ssec:epicycloidal} was inspired by Horton's paper \cite{Horton_71}.
Altogether, these led to the unified construction described in Sec.~\ref{ssec:3-template}.

\subsection {Triplication template, admissible keys}
\label{ssec:3-template}

According to Proposition~\ref{lem:3-balanced}, the columns of a triplication table must comprise a balanced
triple of pseudostarters. 
We start with specializing the kind of balanced triples that will be used in subsequent constructions.

\begin{definition}
\label{def:2-balanced}
A pair of pseudostarters (Def.~\ref{def:pseudostarter}) 
$T_1$, $T_2$ in $\ZZ_m$ is 
{\em special}\ 
if the multiset union $T_1\uplus T_2$ (taking multiplicities of elements into account) 
contains every element of $\ZZ_m^*$ exactly twice.
\end{definition}

Suppose that there are given:

\begin{itemize}
\item
An ordered starter $T_0$ 
and a special pair of  ordered pseudostarters  $T_1, T_2$ 
of order $m=2q+1$.
Let
$$
 T_j=[(x_i^{(j)},y_i^{(j)})]_{i=1}^{q},\quad j=0,1,2.
$$
Moreover, we require 
$T_0$, $T_1$ and $T_2$ 
to be ordered {\em consistently}, so that the sign in 
\begin{equation}
\label{rowdifs}
y_i^{(j)}-x_i^{(j)}=\pm i,
\end{equation}
is the same for $j=0,1,2$. (Cf.~Definition~\ref{def:pseudostarter}.)

\item
An integer $\akey$ modulo $m$ (a {\em key}),
$\akey\in\{1,\dots,m-1\}$. 
\end{itemize}

\begin{definition}
\label{def:3-template}
The {\em triplication template}\ 
$\Sigma^0_m(T_0,T_1,T_2,\akey)$
is a dataset of the same structure as the triplication table introduced in Section~\ref{ssec:3table}:
the special row  with a single pair $(\akey, \akey)$
and $q$ regular rows of length $3$ each. Column $0$ is a copy of the starter $T_0$, while the 
components of the regular rows in columns $1$, $2$ depend on the components of
the corresponding pseudostarter and the key $t$:
\begin{equation}
\begin{array}{rcl}
\label{3table-gen}
( u_{3i-2}, v_{3i-2})&=&(x_i^{(0)}, y_i^{(0)}), \\[0.5ex]
( u_{3i-1}, v_{3i-1})&=&(\akey+x_i^{(1)}, \akey+y_i^{(1)}), \\[0.5ex]
( u_{3i}, v_{3i})&=&(\akey+x_i^{(2)}, \akey+y_i^{(2)}).
\end{array}
\end{equation}
Here $1\leq i\leq q$.
\end{definition}

We can regard a template 
as a {preliminary} triplication table because  it satisfies conditions (i)---(iii) of Definition~\ref{def:3table}.
It may or may not satisfy condition (iv). 
The condition (i) is satisfied because $\akey\neq 0$ and the pair $(T_1,T_2)$ is special.

\begin{definition}
\label{def:key-admissible}
For a fixed triple 
$(T_0, T_1, T_2)$  (a starter and a special pair of pseudostarters),
the set of {\em admissible key values}\ is 
$$
K(T_0,T_1,T_2)=\{\akey \in \ZZ_m^* \mid\text{the template $\Sigma^0_m(T_0,T_1,T_2,t)$ contains no equal pairs}\}.
$$
\end{definition}

Thus,  a template with an admissible value of key is a triplication table
$$
\Sigma_m(T_0,T_1,T_2,\akey)=\Sigma^0_m(T_0,T_1,T_2,\akey), 
\quad \akey\in K(T_0,T_1,T_2).
$$

 \begin{definition}
\label{def:disjoint}
Two pseudostarters that have no pair in common are called {\it disjoint}. 
 \end{definition}

 \begin{remark}
\label{rem:sym}
1. Due to the symmetry between the 2nd and 3rd formulas in 
(\ref{3table-gen}), $K(T_0,T_1,T_2)=K(T_0,T_2,T_1)$ and the tables
$\Sigma_m(T_0,T_1,T_2,\akey)$ and  $\Sigma_m(T_0,T_2,T_1,\akey)$ are equivalent (see Definition~\ref{def:equiv3tables}). 

\smallskip
2. Clearly, if pseudostarters $T_1$ and $T_2$ are not disjoint, then $K(T_0,T_1,T_2)=\emset$.
\end{remark}

Below we consider three special  cases of  construction~\eqref{3table-gen}. 

\subsection{Triplication based on one starter of order $m$}
\label{ssec:basic-3}

For any ordered pairing $T=[(x_i,y_i)]_{i=1}^{I}$ with components in $\ZZ_m$, the conjugate pairing%
\footnote {By analogy with unordered case, where two sets $T$ and $T'$  of unordered pairs are called conjugate if
$\{x,y\}\in T\,\Leftrightarrow\,\{-x,-y\}\in T'$, cf.~\cite[Definition~3.2]{OKSh24}.}
 is defined as
\begin{equation}
\label{conj}
T'=[(-y\!\mmod m,\; -x\!\mmod m)]_{i=1}^{I}.
\end{equation}

Let $T=[(x_i,y_i)]_{i=1}^q$ be an ordered starter in $\ZZ_m$, $m=2q+1$. 
Assign $T_0=T_1=T$ and $T_2=T'$. 
The resulting template $\Sigma^0_m(T,T,T',t)$ depends only on one starter, $T$, and a key $t$. 
In this case we will  write $\Sigma^0_m(T,t)$ as a shortcut for $\Sigma^0_m(T,T,T',t)$
and  $K(T)$ as a shortcut for $K(T,T,T')$.
This is the simplest setup, which was introduced in \cite{OSK25a}.

\begin{example}
\label{ex:caseI}
Let  $m= 7$ and $T=[(2, 3), (4, 6), (5, 1)]$. 

Then $T'=[(7-3,7-2), (7-6,7-4), (7-1,7-5)]=[(4,5), (1,3), (6,2)]$.

Here is the template $\Sigma^0_7(T,3)$ for the key value $\akey=3$:

\bigskip

\centerline{
	\begin{tabular}{|c||c||c|}
		\hline 
		&$(3, 3)$ &   \\
		\hline
		 $(2, 3)$  &  $(3+2, 3+3)$&  $(3+4, 3+5)$\\ 
		\hline
		  $(4, 6)$ &  $(3+4, 3+6)$&  $(3+1,3+3)$ \\ 
		\hline
		 $(5,  1)$ &  $(3+5, 3+1)$&  $(3+6, 3+2)$ \\ 
		\hline
	\end{tabular} $\!\pmod 7$\, =\,
	\begin{tabular}{|c||c||c|}
		\hline 
		&$(3, 3)$ &   \\
		\hline
		 $(2, 3)$  &  $(5, 6)$&  $(0, 1)$\\ 
		\hline
		  $(4, 6)$ &  $(0, 2)$&  $(4,6)$ \\ 
		\hline
		 $(5,  1)$ &  $(1, 4)$&  $(2, 5)$ \\ 
		\hline
	\end{tabular} 
}

\bigskip

This template has no identical pairs, so $3\in K(T)$ is an admissible key value and
$\Sigma_7(T,3)=\Sigma^0_7(T,3)$ is a triplication table.
\end{example}

The set of admissible keys $K(T)$
is easy to characterize. Here is the summary of our results from  \cite{OSK25a} (Lemmas 2,\ 3,\ 4 and Theorem~2). 

\begin{proposition}
\label{prop:num-good-keys}
Let $T$ be a starter in $\ZZ_m$. Consider the set of pair sums
$$
\sumset(T)=\{x+y \mmod m\mid(x,y)\in T\}.
$$
If $0\in\sumset(T)$, then $K(T)=\emset$;
otherwise,
$K(T)=\ZZ_m^*
\setminus \sumset(T)$.
\end{proposition}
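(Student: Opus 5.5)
The plan is a direct case analysis of which pairs in the template $\Sigma^0_m(T,t)=\Sigma^0_m(T,T,T',t)$ can coincide. Write $T=[(x_i,y_i)]_{i=1}^q$ with $y_i-x_i=\hat\imath$, where $\hat\imath=\pm i$. By \eqref{conj} and \eqref{3table-gen}, regular row $i$ consists of three pairs:
$$(x_i,y_i),\qquad (t+x_i,\,t+y_i),\qquad (t-y_i,\,t-x_i).$$
Each of them has directed difference $\hat\imath$; for the third pair this is $(t-x_i)-(t-y_i)=y_i-x_i$. This confirms that $T'$ is consistently ordered with $T$, so the template is well formed.

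\emph{Step 1: reduce to pairs within a single row.} Two identical pairs must have the same directed difference. The special pair $(t,t)$ has difference $0$, while row $i$ has difference $\hat\imath$. Since $1\le i\le q$ and $m=2q+1$, the values $\pm i$ for distinct $i$ are distinct and nonzero modulo $m$. Hence the special pair equals no other pair, and pairs in different regular rows never coincide. Identity as unordered pairs also forces equal directed differences: a swap $(a,b)=(b',a')$ would require $\hat\imath\equiv-\hat\imath$, which is impossible for odd $m$. So condition (iv) of Definition~\ref{def:3table} only needs to be checked among the three pairs of each regular row.

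\emph{Step 2: compare the three pairs in row $i$.}

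Column $0$ versus column $1$: the equality $x_i=t+x_i$ forces $t=0$, which is excluded since $t\in\ZZ_m^*$.

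Column $0$ versus column $2$: the two component equations $x_i=t-y_i$ and $y_i=t-x_i$ both say $t\equiv x_i+y_i$. So these pairs coincide exactly when $t\equiv x_i+y_i\pmod m$.

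Column $1$ versus column $2$: the equations $t+x_i=t-y_i$ and $t+y_i=t-x_i$ both reduce to $x_i+y_i\equiv 0$. So these pairs coincide, for every value of $t$, exactly when $x_i+y_i\equiv0$.

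\emph{Step 3: assemble the result.} If $0\in\sumset(T)$, then some row contains equal pairs for every key, so $K(T)=\emset$. Otherwise the only possible collision is the column $0$/column $2$ one, which occurs iff $t\in\sumset(T)$. Hence $K(T)=\ZZ_m^*\setminus\sumset(T)$. One should also record that conditions (i)--(iii) already hold for the template, as stated after Definition~\ref{def:3-template}, so admissibility is exactly condition (iv).

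There is no real obstacle here. The only point needing care is Step 1, namely justifying that rows cannot share a pair and that reversed orientation cannot produce an unordered coincidence; both follow from $m$ being odd.
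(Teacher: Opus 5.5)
Your proof is correct. The paper does not reprove this proposition; it only cites Lemmas 2--4 and Theorem 2 of \cite{OSK25a}, and your argument (separate rows by directed difference, rule out reversed coincidences by oddness of $m$, then make the three column comparisons within a row) is the direct verification that citation stands for. Incidentally, your criterion shows that the key $t=3$ in Example~\ref{ex:caseI} is not admissible: $4+6\equiv 3\in\sumset(T)$, and row 2 of that template does contain $(4,6)$ in both column 0 and column 2.
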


\begin{corollary}
\label{cor:keys-case1}
(a) If $T=\{(x,m-x)\mid 1\leq x\leq (m-1)/2\}$ is the {canonical (patterned) starter}\ of order $m$,
then $K(T)=\emset$.

\smallskip
(b) If $T$ is a strong starter of order $m$, then $|K(T)|=(m-1)/2$.
\end{corollary}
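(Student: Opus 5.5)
Both parts follow directly from Proposition~\ref{prop:num-good-keys}, so the plan is to compute $\sumset(T)$ in each case and substitute.

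For (a), the canonical starter $T=\{(x,m-x)\}$ has every pair sum equal to $x+(m-x)\equiv 0 \pmod m$. Hence $0\in\sumset(T)$, and the first alternative of Proposition~\ref{prop:num-good-keys} gives $K(T)=\emset$ at once. The only point to check is that the statement's ordering convention is irrelevant. The sums are unchanged under swapping the entries of a pair, and $\sumset(T)$ is defined via pair sums alone.

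For (b), let $T$ be a strong starter of order $m=2q+1$. By property (iii) of a strong starter, every pair sum is nonzero modulo $m$, so $0\notin\sumset(T)$. The pair sums are also pairwise distinct, so $|\sumset(T)|=q$ and $\sumset(T)\subset\ZZ_m^*$. The second alternative of Proposition~\ref{prop:num-good-keys} then yields $K(T)=\ZZ_m^*\setminus\sumset(T)$, of cardinality $(m-1)-q=2q-q=q=(m-1)/2$.

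There is no real obstacle here. The only care needed is in (b): the set subtraction must genuinely remove $q$ elements. This needs both the distinctness of the sums and their lying in $\ZZ_m^*$, and both are guaranteed by the definition of a strong starter recalled in Section~\ref{ssec:3table}. The reduction is legitimate because the proposition is stated for any starter $T$, and $T$ here is used in the role $T_0=T_1=T$, $T_2=T'$ of Section~\ref{ssec:basic-3}, exactly the setting in which $K(T)$ is defined.
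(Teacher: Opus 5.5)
Your argument is correct and matches the paper, which states this corollary as an immediate consequence of Proposition~\ref{prop:num-good-keys}: every pair sum of the canonical starter is $0$, while a strong starter has $q$ distinct nonzero sums, leaving $2q-q=q$ admissible keys. For (a) there is also a one-line direct check that does not need the proposition: each pair $(x,-x)$ is self-conjugate, so $T'=T$ and columns~1 and~2 of every template coincide (cf.\ Remark~\ref{rem:sym}).
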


\subsection{Triplication based on three starters of order $m$}
\label{ssec:3base_starters}

In this case, $T_0$, $T_1$ and $T_2\ne T_1$ are 
any consistently
ordered starters in $\ZZ_m$. 

On the one hand, this is a generalization of the previous construction ($T_1=T_0$, $T_2=T'_0$).
On the other hand, this is a specialization of construction \eqref{3table-gen}: here $T_1$ and $T_2$ are starters,
not just pseudostarters.

 \begin{remark}
Let us mention some nuances related to Remark~\ref{rem:sym} and Corollary~\ref{cor:keys-case1}. 

\begin{enumerate}
\item
Consider the following triple of ordered strong starters in $\ZZ_{13}$:
$$
\ba{l}
R = [ (3,4),  (6,8),  (9,12),    (10,1),   (2,7),   (5,11) ],
\\[0.3ex]
S= [(3,4), ( 5,7 ), (   9,12  ), (10,1 ),  (  6,11), (    2,8  )],
\\[0.3ex]
T= [(9,10), (    5,7  ), (  1,  4  ), (    12,3 ),   (     6,11),(    2,8  )].
\ea
$$

Here $S$ has common pairs with both $R$ and $T$, hence
$K(R, S, T)=K(T, R, S)=\emset$.
However, $K(S,R,T)=\{1,2,3,5,6,9\}$. 

\item

It is possible that three pairwise disjoint strong starters have empty set of admissible keys, see 
Example~\ref{ex:ord11-all_disjoint}, Eq.~(\ref{empty}).  

\item
In general, the number of admissible key values can vary, see 
examples in Section 7. 
We do not know any generalization of Corollary~\ref{cor:keys-case1}(b).
\end{enumerate}

\end{remark}

\subsection{Triplication with epicycloidal pseudostarters}
\label{ssec:epicycloidal}

In this version of construction \eqref{3table-gen} we take an arbitrary starter $T_0$ and 
a pair of pseudostarters $T_1$ and $T_2$ obtained by the following special construction.

\begin{definition}
\label{ps-st}
Let $m=2q+1$ and $\mu\in\{2,\dots,m-2\}$ be such that $\mu-1$ 
is coprime with $m$. An {\em epicycloidal pseudostarter}\ of order $m$ with {\em multiplier}\ $\mu$ has the form
$$
 \Epic{m}(\mu)=[(x_i(\mu), \mu x_i(\mu)), \; i=1,\dots,q],
$$
where $x_i(\mu)$ is the solution of the equation $(\mu-1)x_i(\mu)= i$ in $\ZZ_m$.
\end{definition}

\begin{remark}
The adjective {\it epicycloidal} refers to the geometric analogy: the envelope of chords $[\theta,\mu \theta \pmod{2\pi}]$ in the unit circle, where $\mu$ is an integer $\geq 2$, is an epicycloid with $\mu-1$ cusps. The epicycloid with one cusp 
is called the cardioid. The adjective {\em cardioidal} was proposed in \cite{OKSh18} for starters in cyclic groups consisting of pairs $(x,2x)$, developing the idea of starters' visualization found in the PhD thesis of Dinitz \cite{Dinitz-Stinson_81}.
The epicycloidal generalization was mentioned in \cite[p.~6]{OKSh18} but left without elaboration.

By analogy with term ``$k$ quotient starter'' introduced in \cite{Dinitz_81}, one could say that $\Epic{m}(\mu)$
is a ``$1$-quotient pseudostarter''. In our opinion, though, the word ``quotient'' is overloaded in this
context and,  as we deal with cyclic groups only,  the allusion to geometry is a safe way to resolve potential terminological collisions.
\end{remark}

Let $\Epic{(m)'}(\mu)$ be a pseudostarter conjugate to $\Epic{(m)}(\mu)$ as defined in (\ref{conj}).
\begin{proposition}
\label{prop:epi-pair} 
{\rm (a)} Let $\mu-1$ and $\mu$ be coprime with $m$.
Then the pseudostarters $\Epic{(m)}(\mu)$ and $\Epic{(m)'}(\mu)$ form a 
special pair (see Definition~\ref{def:2-balanced}).

{\rm (b)} If, in addition, $\mu+1$ is coprime with $m$ then $\Epic{(m)}(\mu)$ and $\Epic{(m)'}(\mu)$  are disjoint
 (see Definition~\ref{def:disjoint}).
\end{proposition}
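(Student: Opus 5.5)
The plan is to compute everything explicitly from Definition~\ref{ps-st} and the conjugation rule \eqref{conj}. Throughout, write $x_i=x_i(\mu)=i\,(\mu-1)^{-1}$ in $\ZZ_m$; this is well defined because $\mu-1$ is a unit. Then $\Epic{m}(\mu)=[(x_i,\mu x_i)]_{i=1}^q$ and, by \eqref{conj}, $\Epic{(m)'}(\mu)=[(-\mu x_i,\,-x_i)]_{i=1}^q$.

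First I will check that both really are ordered pseudostarters in the sense of Definition~\ref{def:pseudostarter}. The directed difference of the $i$-th pair of $\Epic{m}(\mu)$ is $\mu x_i-x_i=(\mu-1)x_i=i$. For the conjugate, the $i$-th pair has difference $-x_i+\mu x_i=i$ as well. So both are consistently ordered pseudostarters with $\hat d=d$.

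For (a), the multiset union of the two pseudostarters is
$$\{x_i\}\uplus\{-x_i\}\uplus\{\mu x_i\}\uplus\{-\mu x_i\},\qquad i=1,\dots,q.$$
I will split it as $\{\pm x_i\}_{i=1}^q$ and $\{\pm \mu x_i\}_{i=1}^q$.
\begin{itemize}
\item The multiset $\{\pm i\}_{i=1}^q$ is exactly $\ZZ_m^*$, each element once.
\item Multiplication by the unit $(\mu-1)^{-1}$ is a bijection of $\ZZ_m^*$, so $\{\pm x_i\}_{i=1}^q$ is $\ZZ_m^*$, each element once.
\item Multiplying further by the unit $\mu$ (here the coprimality of $\mu$ with $m$ is used), $\{\pm \mu x_i\}_{i=1}^q$ is again $\ZZ_m^*$, each element once.
\end{itemize}
Hence the union contains every element of $\ZZ_m^*$ exactly twice and contains no $0$. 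This is precisely Definition~\ref{def:2-balanced}.

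For (b), suppose the two pseudostarters share a pair, with the pair $(x_i,\mu x_i)$ coinciding with $(-\mu x_j,-x_j)$ possibly up to order. I will treat both orders, so the conclusion holds whether pairs are compared as ordered or unordered.
\begin{itemize}
\item \emph{Same order.} Then $x_i=-\mu x_j$ and $\mu x_i=-x_j$. Substituting gives $(\mu^2-1)x_j=0$. Since $\mu^2-1=(\mu-1)(\mu+1)$ is a unit by the hypotheses, $x_j=0$, so $j=0$. This contradicts $1\le j\le q$.
\item \emph{Swapped order.} Then $x_i=-x_j$ and $\mu x_i=-\mu x_j$. Comparing directed differences gives $i\equiv -j\pmod m$, which is impossible for $i,j\in\{1,\dots,q\}$.
\end{itemize}
Note that in the ordered setting with matching rows ($i=j$), the first case reduces to $(\mu+1)x_i=0$, which already uses only the coprimality of $\mu+1$.

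There is no real obstacle. The only point needing care is which notion of ``common pair'' Definition~\ref{def:disjoint} intends, and the case split above handles both readings.
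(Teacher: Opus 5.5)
Your proof is correct and follows the paper's argument. In (a) the paper likewise writes the components as images of $\{1,\dots,q\}$ and its complement in $\{1,\dots,m-1\}$ under multiplication by the units $(\mu-1)^{-1}$ and $\mu(\mu-1)^{-1}$; in (b) it checks only the same-row, same-order comparison $x_i\neq -\mu x_i$, which is your closing $(\mu+1)x_i=0$ remark, so your extra cases (other rows via $\mu^2-1$ being a unit, reversed order via directed differences) just spell out what the paper leaves implicit.
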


\begin{proof}
(a) The set of components $x_i(\mu)$ in $\Epic{(m)}(\mu)$ is the image of the set $D=\{1,2,\dots,\frac{m-1}2\}$
under the map $\phi:\; d\mapsto (\mu-1)^{-1} d\mmod m$. 
The set of components $m-x_i(\mu)$ in $\Epic{(m)'}(\mu)$ is the image of $\ZZ_m^*\setminus D$
under the same map. 
The union of the two sets is $\ZZ_m^*$, since $\phi$ is a bijection on $\ZZ_m^*$.

Similarly, the set comprising all $y_i=\mu x_i(\mu)$ in $\Epic{(m)}(\mu)$ and all $m-y_i$ in $\Epic{(m)'}(\mu)$
is $\ZZ_m^*$; the bijection in this case is $\psi: \; d\mapsto \mu(\mu-1)^{-1} d$.

(b) Invertibility of $\mu+1$ ensures that $x_i\neq m-y_i$ for all $i$. 
(Conversely, if $\gcd(\mu+1,m)>1$, the pseudostarters $\Epic{(\mu)}$ and $\Epic{(\mu)'}$ do have identical pairs.)

\end{proof}

\begin{corollary}
\label{cor:case3}
{\rm (a)} If $m$ is coprime with $\mu$, $\mu-1$ and $\mu+1$, then $(\Epic{(\mu)}, \Epic{(\mu)'})$
is a special pair of disjoint pseudostarters. 

{\rm (b)} 
There are no special pairs of disjoint epicycloidal pseudostarters if $3|m$. 
\end{corollary}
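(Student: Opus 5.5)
Throughout I read ``special pair of epicycloidal pseudostarters'' as the pair $(\Epic{(m)}(\mu),\Epic{(m)'}(\mu))$ considered in Proposition~\ref{prop:epi-pair}, with $\mu-1$ coprime to $m$ as Definition~\ref{ps-st} requires.

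Part (a) is immediate. The hypotheses of Proposition~\ref{prop:epi-pair}(a) and (b) both hold, so the pair is special and disjoint.

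For part (b), let $3\mid m$ and take any admissible $\mu$. Among $\mu-1,\mu,\mu+1$ one is divisible by $3$, hence not coprime with $m$. Since $\gcd(\mu-1,m)=1$ by definition, two cases remain: $3\mid\mu$ or $3\mid\mu+1$.

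\emph{Case $3\mid\mu$.} I will show the pair is not special. Because $3\mid m$, reduction mod $3$ is well defined on $\ZZ_m$. Every second component $y_i=\mu x_i$ of $\Epic{(m)}(\mu)$ is then $\equiv 0 \pmod 3$, and so is every first component $-y_i$ of the conjugate. By the argument in the proof of Proposition~\ref{prop:epi-pair}(a), the components $x_i$ and $-x_i$ together run over $\ZZ_m^*$ exactly once. Hence a nonzero element $\equiv 1\pmod 3$ (for instance $1$) occurs only once in the multiset union, not twice. So the pair is not special, in the sense of Definition~\ref{def:2-balanced}.

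\emph{Case $3\mid\mu+1$.} I will make precise the parenthetical converse in the proof of Proposition~\ref{prop:epi-pair}(b): the two pseudostarters share a pair. The set $N=\{x\in\ZZ_m^*:(\mu+1)x=0\}$ is nonempty (it contains $m/3$) and closed under negation. The first components $x_i=(\mu-1)^{-1}i$, $i=1,\dots,q$, form a set $X$ with $X\cup(-X)=\ZZ_m^*$. Therefore $X$ meets $N$, and there is an $i$ with $(\mu+1)x_i=0$, i.e. $\mu x_i=-x_i$. For this $i$ the pair $(x_i,\mu x_i)=(x_i,-x_i)$ of $\Epic{(m)}(\mu)$ has conjugate $(-\mu x_i,-x_i)=(x_i,\mu x_i)$, the same pair. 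So the pseudostarters are not disjoint.

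The only real work, and the step I would check most carefully, is this last one: it needs the existence of an index $i$ with $x_i\in N$, which rests on the closure of $N$ under negation combined with $X\cup(-X)=\ZZ_m^*$.
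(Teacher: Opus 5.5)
Your proof is correct and follows the route implicit in the paper. The paper states the corollary without proof as a consequence of Proposition~\ref{prop:epi-pair}: part (a) is read off directly, and part (b) rests on one of $\mu-1,\mu,\mu+1$ being divisible by $3$, with $\mu-1$ excluded by definition. The paper only asserts the converse for $\mu+1$ (parenthetically) and leaves the converse for $\mu$ unstated, so your mod-$3$ count (the pair is not special when $3\mid\mu$) and your negation-closed set $N$ (which yields a common pair when $3\mid\mu+1$) supply exactly the details it omits.
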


\begin{example}
\label{psev-tt}
Let  $m= 7$,
$T_0=[(2, 3), (4, 6), (5, 1)]$,
and $\mu=2$. Then
$$
 T_1=\Epic{(7)}(2)=[(1,2), (2,4), (3,6)], \qquad
  T_2=\Epic{(7)'}(2)=[(5,6), (3,5), (1,4)].
$$ 
Here is the template $\Sigma^0_7(T_0,\,T_1,\,T_2,\;3)$ for the key  $\akey=3$:

\bigskip

\centerline{
	\begin{tabular}{|c||c||c|}
		\hline 
		&$(3, 3)$ &   \\
		\hline
		 $(2, 3)$  &  $(4,5)$&  $(1, 2)$\\ 
		\hline
		  $(4, 6)$ &  $(5, 0)$&  $(6,1)$ \\ 
		\hline
		 $(5,  1)$ &  $(6, 2)$&  $(4, 0)$ \\ 
		\hline
	\end{tabular}
}
\smallskip

This template is a triplication table  $\Sigma_7(T_0,\,T_1,\,T_2,\;3)$ and it will be used in Example~\ref{ex:pseu} for construction of a strong starter of order 21.

\end{example}

\begin{center}
*\,*\,*
\end{center}


Practical methods for obtaining triplication tables are not limited to the outlined constructions.
In particular, it is possible to generate triplication tables treating them just as combinatorial designs
satisfying Definition~\ref{def:3table} and using algorithms of randomized search. Some examples will be presented
in Section~\ref{ssec:ex-sudoku-random}. 

In the next two sections we will describe construction of a strong starter of order $3m$
under two particular discrimination scenarios, assuming that we already have a triplication table $\Sigma_m$ found by any method and satisfying  Definition~\ref{def:3table}. 
They are the Mod and Carry scenarios mentioned in Section~\ref{sec:Sudoku}.

\section{Triplication with Mod discrimination}
\label{sec:mod}
\subsection{Obtaining starters of order $3m$ with $m$ coprime to  3}
\label{ssec:3basic}

The method for constructing strong starters of order $3m$, where
 $m=2q+1$ is odd and not divisible by 3, is described in \cite{OSK25a}. 
 The following is a short summary. 

\begin{enumerate}

\item[Step I.]  For a triplication table $\Sigma_m $, set up
a Modular Sudoku Problem,
which is a system of linear equations and inequalities in $\ZZ_3$, to be solved for $3m-1$ unknowns. Specifically, find the weak sets and monochrome sets and impose the constraints ((0)--((3)) with operations $\myminus$, $\myplus$ defined by (\ref{omop}).
Here $\nu=0$ (thus $r=3$).  The constraints ((4)) of Section~\ref{ssec:sigg-table},
which in the Mod scenario take the form of Modular Compatibility Conditions, are trivial in the case $\nu=0$. 

This part has low computational complexity $O(m)$. 

\item[Step II.]
Solve the obtained MSP.
(A solution, if it exists, is not unique.) 
This is an algorithmically complex part. It may use various methods and techniques. In \cite{OSK25a}
the MSP was solved using a universal SAT solver {\zsat}.  
\end{enumerate}

\begin{enumerate}
\item[ Step III.]
This part, conceptually simple and algorithmically fast ($O(\log m)$ per entry),
builds the final answer ---
a strong starter  of order $3m$ --- from the found solution of the MSP.


We have two tables of pairs of values: $\Sigma_m$ (the triplication table),
where the values belong to $\{0,1,\dots,m-1\}$, and $\Sigg_3$, now filled with
numbers in $\{0,1,2\}$ satisfying the imposed constraints.

Each table contains $3q+1$ pairs. To the pairs in the same position taken from the two tables,
$(u_i, v_i)$ and $(U_i,V_i)$ we put in correspondence the pair $(x_i,y_i)$ such that
$x_i,y_i\in \{0,1,\dots,3m-1\}$, $x_i\equiv u_i\pmod m$, $x_i\equiv U_i\pmod 3$,
and similarly for $y_i$.
The existence and uniqueness of $(x_i,y_i)$ (given $u_i, v_i, U_i, V_i$) follows from CRT.
\end{enumerate}

In the language of abstract algebra, we recover an element of the abelian group $\ZZ_{3m}\cong
\ZZ_m\oplus\ZZ_3$ (by means of CRT)
from its projections onto the direct summands:

\begin{equation}
\label{diagram:CRTprojections}
 \begin{tikzcd}[column sep=tiny]
 & \ZZ_{3m}\ar[dl, dashed, "\pi_m"'] \ar[dr, dashed, "\pi_3"]
 &
 &[1.5em] \\
 \ZZ_m
 &
& \ZZ_3
 &
 &
 \end{tikzcd}
\end{equation}
Here $\pi_m$ and $\pi_3$ are the reductions $x\mapsto x\pmod{m}$ and $x\mapsto x\pmod{3}$, respectively. 
In Section~\ref{sec:Sudoku}, the map $\Phi_{\rm Mod}=(\pi_m,\pi_3)$ was described  by 
Eqs.~\eqref{u-proj}, \eqref{mod-sc} with  $\nu=0$.

\smallskip

 Theorem~1 in \cite{OSK25a} asserts that if the constraints listed 
 in Step I
 are fulfilled then $x_i\neq 0\neq y_i$ for $i\in\{0,1,\dots,3q\}$,  
the pairs $(x_i,y_i)$ constitute 
a strong starter in $\ZZ_{3m}$. This is a special case of Theorem~\ref{thm:main} with operations $\myminus$, $\myplus$ defined by (\ref{omop})
with $\nu=0$ and $F_{\rm Mod}$ realized via CRT.

\subsection{Obtaining starters of order $3m$ with $m$ divisible by  3}
\label{sec:9etc}

Let now $m=3^\nu p$, where $\nu\geq 1$ and $3\notdiv p$.
In this case the triplication method as formulated in Section~\ref{ssec:3basic} is not applicable, because
CRT cannot be used to recover values from the residues modulo $m$ and modulo $3$.
We will describe a suitable modification of the method.

In Step I, given a triplication table $\Sigma_m$, we impose the constraints ((0)-((4)) with operations $\myminus$, $\myplus$ defined by (\ref{omop}) and $\nu\ge 1$.
A significant difference between this section and Section~\ref{ssec:3basic} is that a 
Sudoku problem is formulated modulo $r=3^{\nu+1}$ rather than modulo $3$.
In Step II we obtain $\Sigg_{3^{\nu+1}}$, provided that the solution exists. 
In Step III of the triplication process we  recover the components of a new starter of order $3m$
from their residues modulo $m$ and modulo $3^{\nu+1}$.
(For clarity we treated the case $\nu=0$ separately in Section~\ref{ssec:3basic}, although 
technically it could be included here as a special case.)

Since $\gcd(m, 3^{\nu+1})=3^{\nu}$, for the system of congruences
\begin{equation}
\label{congmod3m}
 x\equiv u\mmod m, \qquad
 x\equiv U\mmod 3^{\nu+1}
\end{equation}
to have a solution, $u$ and $U$ must satisfy the Modular Compatibility Condition 
\begin{equation}
\label{compat}
u\equiv U\mmod 3^\nu,
\end{equation}
which represents, in this scenario, 
a constraint of type ((4)) of Section~\ref{ssec:sigg-table}.

From a more abstract point of view, at Step III we want to recover an element of an abelian group
from two its projections onto factorgroups in the presence of compatibility condition:

\begin{equation}
\label{diagram:projections}
 \begin{tikzcd}[column sep=tiny]
 & \ZZ_{3m}\ar[dl, dashed, "\mmod m"'] \ar[dr, dashed, "\mmod 3^{\nu+1}"]
 &
 &[1.5em] \\
 \ZZ_m\ar[dr, "\mmod 3^\nu"'] 
 &
& \ZZ_{3^{\nu+1}}\ar[dl, "\mmod 3^\nu"]
\\
 & \ZZ_{3^\nu} 
 &
 &
 \end{tikzcd}
\end{equation}

\bigskip
This parallels the view of CRT as a manifestation
of the isomorphism $\ZZ_{3m}\simeq\ZZ_m\oplus\ZZ_3$ in the case $\gcd(m,3)=1$.
Indeed, 
Diagram~\eqref{diagram:projections} makes sense also if $\nu=0$, in which case the bottom vertex
of the rhombus is the trivial group $\{0\}$, the compatibility condition is a tautology, and \eqref{diagram:projections} 
reduces to Diagram~\eqref{diagram:CRTprojections}.

\smallskip
 
As shown in Appendix 2, 
the following method can be used to recover the solution $x$ of equations \eqref{congmod3m} under condition \eqref{compat}.

Let $\bar u=u\pmod {3^\nu}$. Then $(u-\bar u)/3^\nu$ and $(U-\bar u)/3^{\nu}$ are integers.
Since gcd$(p,3)=1$, one can use CRT to find $ x'$ such that
\begin{equation}
\label{CRT-gen-1}
 x'\equiv\frac {u-\bar u}{3^\nu} \!\!\pmod p, \qquad  x'\equiv \frac {U-\bar u}{3^\nu} \!\!\pmod 3.
\end{equation}
The answer is given by
\begin{equation}
\label{CRT-gen-2}
x=(\bar u+ 3^\nu x') \!\mmod{3m}\,=\,F_{\rm Mod}(u,U). 
\end{equation}
The so defined function  $F_{\rm Mod}$ 
is used in (\ref{recover-starter}) to recover a strong starter in the Mod scenario 
for $\nu\ge1$. 

For convenience we give an explicit formulation of the abstract Theorem~\ref{thm:main} adapted to the
Mod scenario. At the same time, it
is an extension for the case $\nu\ge 1$ of Theorem 1 in \cite{OSK25a}.

\begin{proposition}
\label{prop:main-mod}
Let $\Sigma_m$ be a triplication table, where $m=3^\nu p$, $\nu\ge 1$,  $3\notdiv p$. 
Suppose, in Step I of the triplication process, the Sudoku problem is set up using the operations $\myminus$, $\myplus$ given by (\ref{omop}).
Suppose in Step II a table $\Sigg_{3^{\nu+1}}$ congruous with  $\Sigma_m$ is found. 
If the pairing $S$  is constructed by formulas (\ref{recover-starter}) with $F_{\rm Mod}$ given by (\ref{CRT-gen-1}), (\ref{CRT-gen-2}) and  $(u_i,v_i) \in \Sigma_m$, 
$(U_i,V_i) \in \Sigg_{3^{\nu+1}}$, $0\le i\le \frac{3(m-1)}2$, then $S$ is a strong starter in $\ZZ_{3m}$. 
\end{proposition}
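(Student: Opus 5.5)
The plan is to derive Proposition~\ref{prop:main-mod} as a direct instance of Theorem~\ref{thm:main}. For that we have to confirm that the Mod scenario with $\nu\ge1$ is a valid discrimination scenario in the sense of Section~\ref{ssec:discr}, and that it satisfies \eqref{F00}. We also have to check that the formulas \eqref{CRT-gen-1}, \eqref{CRT-gen-2} really give the inverse map $F_{\rm Mod}=\Phi_{\rm Mod}^{-1}$ on $\Omega$. Once these facts are in place, Theorem~\ref{thm:main} applies verbatim: a table $\Sigg_{3^{\nu+1}}$ congruous with $\Sigma_m$, with $r=3^{\nu+1}\ge 9\ge 3$, yields a strong starter.

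First we show that $\Phi_{\rm Mod}:x\mapsto (x\mmod m,\ x\mmod 3^{\nu+1})$ is injective on $\ZZ_{3m}$. If $x\equiv y\pmod m$ and $x\equiv y\pmod{3^{\nu+1}}$, then $x\equiv y$ modulo $\lcm(m,3^{\nu+1})=3^{\nu+1}p=3m$, so $x=y$ in $\ZZ_{3m}$. We then identify the range. Any image pair $(u,U)$ satisfies $U\equiv x\equiv u\pmod{3^\nu}$. Conversely, the set of compatible pairs has exactly $m\cdot 3=3m$ elements, so by cardinality the range is precisely the set $\Omega$ described after \eqref{mod-sc}. Hence the Consistency Constraints ((4)) are exactly the Modular Compatibility Conditions \eqref{compat}. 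Condition \eqref{F00} holds trivially, since $0\mmod 3^{\nu+1}=0$.

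Next we verify that $x=\bar u+3^\nu x'$ from \eqref{CRT-gen-2} solves \eqref{congmod3m}. Modulo $m=3^\nu p$, the congruence $x'\equiv (u-\bar u)/3^\nu \pmod p$ gives $3^\nu x'\equiv u-\bar u\pmod{m}$, so $x\equiv u\pmod m$. Modulo $3^{\nu+1}$, the congruence $x'\equiv (U-\bar u)/3^\nu\pmod 3$ gives $3^\nu x'\equiv U-\bar u\pmod{3^{\nu+1}}$, so $x\equiv U\pmod{3^{\nu+1}}$. Here \eqref{compat} is exactly what makes $(U-\bar u)/3^\nu$ an integer. Reducing $x$ modulo $3m$ does not affect either congruence, because $3m$ is a multiple of both $m$ and $3^{\nu+1}$. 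Together with injectivity, this shows $F_{\rm Mod}=\Phi_{\rm Mod}^{-1}$ on $\Omega$. Finally, \eqref{omop} shows that the operations $\myplus,\myminus$ used in Step I agree with those defined abstractly by \eqref{myplusmin}, because reduction modulo $3^{\nu+1}$ is a ring homomorphism from $\ZZ_{3m}$ (since $3^{\nu+1}\mid 3m$).

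With all hypotheses of Theorem~\ref{thm:main} checked, the conclusion follows. The only step requiring real care, rather than bookkeeping, is the decoding check. It is the one place where the non-coprimality of the moduli matters, and where \eqref{compat} must be invoked to make the division by $3^\nu$ legitimate.
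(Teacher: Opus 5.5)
Your proposal is correct and follows the paper's route. The paper presents this proposition simply as the specialization of Theorem~\ref{thm:main} to the Mod scenario. It relies on the description of $\Omega$ after \eqref{mod-sc}, on \eqref{omop} and \eqref{F00}, and on Appendix~2 (Lemma~\ref{lem:CRTgen}) for the decoding formulas \eqref{CRT-gen-1}--\eqref{CRT-gen-2}. Your checks of injectivity, of the range $\Omega$ by counting, and of $\Phi_{\rm Mod}\circ F_{\rm Mod}=\mathrm{id}$ on $\Omega$ just make those verifications explicit.
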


\begin{example}
\label{ex:sudoku45}
Let $m=15=3^1\cdot 5$, so $\nu=1$. We start with  triplication table $\Sigma_{15}=\Sigma_{15}(T,t)$ constructed by formulas (\ref{3table-gen}) in the one-starter based setup (Sec.~\ref{ssec:basic-3})
using key $\akey=4$ and the ordered starter 
$$
T=[(3,4),(12,14),(7,10),(2,6),(8,13),(5,11),(9,1)].
$$
 Our goal is to find a strong starter of order 45 by solving an MSP modulo $9$.

\smallskip
The tables $\Sigma_{15}$ (numerical) and $\Sigg_9$ (with indeterminates) are as follows:

\bigskip
\begin{tabular}{|c||c||c|}
\hline
&   4, 4  & \\
\hline
3, 4 & 7, 8 & 0, 1 \\
\hline
12, 14 & 1, 3 & 5, 7 \\
\hline
7, 10 & 11, 14 & 9, 12 \\
\hline
2, 6 & 6, 10 & 13, 2 \\
\hline
8, 13 & 12, 2 & 6, 11 \\
\hline
5, 11 & 9, 0 & 8, 14 \\
\hline
9, 1 & 13, 5 & 3, 10 
\\
\hline
\end{tabular}
\hspace{2em}
\begin{tabular}{|rl||rl||rl|}
\hline
&  & $U_0$, & $V_0$ & & \\
\hline
$U_1$, & $V_1$ & $U_2$, & $V_2$ & $U_3$, & $V_3$ \\
\hline
$U_4$, & $V_4$ & $U_5$, & $V_5$ & $U_6$, & $V_6$ \\
\hline
$U_7$, & $V_7$ & $U_8$, & $V_8$ & $U_9$, & $V_9$ \\
\hline
$U_{10}$, & $V_{10}$ & $U_{11}$, & $V_{11}$ & $U_{12}$, & $V_{12}$ \\
\hline
$U_{13}$, & $V_{13}$ & $U_{14}$, & $V_{14}$ & $U_{15}$, & $V_{15}$ \\
\hline
$U_{16}$, & $V_{16}$ & $U_{17}$, & $V_{17}$ & $U_{18}$, & $V_{18}$ \\
\hline
$U_{19}$, & $V_{19}$ & $U_{20}$, & $V_{20}$ & $U_{21}$, & $V_{21}$
\\
\hline
\end{tabular}

\bigskip

The weak sets and the weak pair sets (see Definition~\ref{def:wset}) are
$$
\begin{array}{lcl}
 W_0=\{2,12\} & \to & \Wp_0=\{(7,8),(13,2)\},\\
 W_1=\{3,11,16\} & \to &\Wp_1=\{(0,1),(6,10),(5,11)\},\\
 W_2=\{7,15\} & \to &\Wp_2=\{(7,10),(6,11)\},\\
 W_6=\{9,13\} & \to &\Wp_6=\{(9,12),(8,13)\},\\
 W_7=\{1,18\} & \to &\Wp_7=\{(3,4),(8,14)\},\\
 W_8=\{0,10\} & \to &\Wp_8=\{(4,4),(2,6)\},\\
 W_{10}=\{8,19\} & \to &\Wp_{10}=\{(11,14),(9,1)\}.
 \end{array}
$$

The monochrome sets mapped to the subsets of indeterminates are
$$
\begin{array}{lclcl}
 M_0=\{U_3, V_{17}\} &&
 M_5=\{U_6,U_{16},V_{20}\} &&
 M_{10}=\{V_7,V_{11},V_{21}\}\\
 M_1=\{V_3,U_5,V_{19}\} &&
 M_6=\{V_{10},U_{11},U_{15}\} &&
 M_{11}=\{U_8,V_{15},V_{16}\}\\
 M_2=\{U_{10},V_{12},V_{14}\} &&
 M_7=\{U_2,V_6,U_7\}&&
 M_{12}=\{U_4,V_9,U_{14}\}\\
 M_3=\{U_1,V_5,U_{21}\}&&
 M_8=\{V_2,U_{13},U_{18}\}&&
 M_{13}=\{U_{12},V_{13},U_{20}\}\\
 M_4=\{U_0,V_0,V_1\}&&
 M_9=\{U_9,U_{17},U_{19}\}&&
 M_{14}=\{V_4,V_8,V_{18}\}\\
 \end{array}
$$

The Modular Compatibility Conditions stipulate that the values in table $\Sigg_9$ must have the following
residues modulo $3$: 

\bigskip
\centerline{
\begin{tabular}{|c||c||c|}
\hline
&   1, 1  & \\
\hline
0, 1 & 1, 2 & 0, 1 \\
\hline
0, 2 & 1, 0 & 2, 1 \\
\hline
1, 1 & 2, 2 & 0, 0 \\
\hline
2, 0 & 0, 1 & 1, 2 \\
\hline
2, 1 & 0, 2 & 0, 2 \\
\hline
2, 2 & 0, 0 & 2, 2 \\
\hline
0, 1 & 1, 2 & 0, 1 
\\
\hline
\end{tabular}
}

\bigskip
This data constitute a complete set-up  (Step I) for the MSP modulo 9.

Table~\ref{tab:sudoku45} (left) is a solution (Step II) for $\Sigg_9$ found by hand 
before a computer program was written.
Table~\ref{tab:sudoku45} (right) is the combination  (Step III) of the tables $\Sigma_{15}$ and $\Sigg_9$ as the solution of the projection problem
\eqref{diagram:projections} using \eqref{CRT-gen-1}--\eqref{CRT-gen-2} with $\nu=1$.

\begin{table}
\centerline{
\begin{tabular}{|c||c||c|}
\hline
& 1,  7 &  \\
\hline
3,  4 & 1, 5 & 6, 4 \\
\hline
3, 5 & 1, 0 & 8, 4 \\
\hline
7, 1 & 5, 2 & 6, 6 \\
\hline
8, 3 & 0, 7 & 1, 2 \\
\hline
8, 7 & 0, 5 & 6, 8 \\
\hline
2, 2 & 0, 3 & 2, 8 \\
\hline
3,7 & 4, 5 & 6,  4
\\
\hline
\end{tabular}
\hspace{2em}
\begin{tabular}{|c||c||c|}
\hline
& 19, 34& \\
\hline
3, 4 & 37, 23 & 15, 31 \\
\hline
12, 14 & 1,  18 & 35, 22 \\
\hline
7, 10 & 41,  29 & 24, 42 \\
\hline
17,  21 & 36,  25 & 28, 2 \\
\hline
8,  43 & 27, 32 & 6,  26 \\
\hline
20,  11 & 9,  30 & 38,  44 \\
\hline
39,  16 & 13, 5 & 33,  40 
\\
\hline
\end{tabular}
}
\caption{Construction of a strong starter of order
${45}$ (Example~\ref{ex:sudoku45})}
\label{tab:sudoku45}
\end{table}

The constructed starter is
    $S=\{\{19, 34\},  \{3, 4\}, \dots, \{33,  40\}\}$.
\end{example}

One may wonder whether there is something special about the case $p=1$, where the order of a starter
to build is a power of 3. The answer is no: it's all the same. 

\begin{example}
\label{ex:m9allkeys}
In this example we use  formulas \eqref{3table-gen} and the setup of Sec.~\ref{ssec:basic-3} with starter 
$$
T=[(5,6),(2,4),(7,1),(8,3)]
$$
of order $m=9$.
We have constructed the triplication tables $\Sigma_9(T, t)$ for all admissible keys 
$\akey\in K=\{1,3,4,5,7\}$. 
For every $\akey\in K$ the obtained 
Sudoku problem modulo 27
has a solution.
In this case a strong starter $S$ of order 27 can be read off the table $\Sigg_{27}$ directly.
    Sample solutions are listed in  Table~\ref{table:SS-27}.

\begin{table}
\centerline{
\begin{tabular}{|c|p{9.4cm}|}
\hline
Key & 
{\hfil{Sample starter}\hfil}
\\
\hline
1 & $\{\{19, 1\}, \{23, 15\}, \{6, 16\}, \{4, 5\}, \{20, 13\}, \{3, 14\}, \{24, 26\}, $
\newline $\{10, 25\}, \{2, 8\}, \{21, 18\}, \{12, 17\}, \{22, 9\}, \{11, 7\}\}$\\
\hline
3 & $\{\{21, 12\}, \{5, 6\}, \{17, 9\}, \{15, 25\}, \{20, 4\}, \{14, 7\}, \{8, 10\},$
\newline $ \{1, 16\}, \{22, 19\}, \{23, 2\}, \{3, 26\}, \{24, 11\}, \{13, 18\}\}$ \\
\hline
4 &  $\{\{4, 13\}, \{14, 24\}, \{18, 19\}, \{7, 26\}, \{2, 22\}, \{6, 8\}, \{9, 20\},$
\newline $ \{1, 25\}, \{23, 11\}, \{15, 21\}, \{3, 17\}, \{16, 12\}, \{5, 10\}\}$\\
\hline
5 & $\{\{23, 14\}, \{5, 24\}, \{10, 20\}, \{8, 9\}, \{2, 22\}, \{7, 18\}, \{1, 3\}, $
\newline $ \{19, 16\}, \{6, 12\}, \{25, 13\}, \{21, 26\}, \{17, 4\}, \{15, 11\}\}$\\
\hline
7 &  $\{\{7, 25\}, \{14, 24\}, \{12, 4\}, \{10, 11\}, \{20, 13\}, \{18, 2\}, \{21, 23\}, $
\newline $ \{19, 16\}, \{17, 5\}, \{9, 15\}, \{3, 26\}, \{1, 6\}, \{8, 22\}\}$\\
\hline
\end{tabular}
}
\caption{Sample strong starters of order 27 for all admissible keys (Example~\ref{ex:m9allkeys})}
 \label{table:SS-27}
 \end{table}

\end{example}

\section{Triplication with Carry discrimination}
\label{sec:carry}

Let us look carefully at the freedom of selection of candidate values for the unknowns $U_i$, $V_i$ in 
the Sudoku problem modulo $3^{\nu+1}$ described in Section~\ref{sec:9etc}. Due to the 
Modular Compatibility Conditions --- a special kind of constraints \consid{4}, --- every $U_i$ can assume not $3^{\nu+1}$ values but just three: $u_i$, $u_i+3^{\nu}\pmod{3^{\nu+1}}$
and $u_i+2\cdot 3^{\nu}\pmod{3^{\nu+1}}$.
Hence, the Sudoku problem of Section~\ref{sec:9etc} is to be solved, in essence, for a set of $3m-1$ unknowns
with values in the set $\{0,1,2\}$, just like in the basic variant ($\nu=0$) treated in Section~\ref{ssec:3basic}.

This fact motivates us to seek a variant of the method that explicitly involves unknowns defined modulo 3.
Such a variant is presented in this section.

In this variant,  at step III of the triplication process we recover the resulting starter
of order $3m$ from two sets of data (modulo $m$ and modulo $3$) by an elementary school arithmetic formula: 
given $u\in\{0,1,\dots,m-1\}$ and $U\in\{0,1,2\}$, let
\begin{equation}
\label{carry-recover}
 x=F_{\rm Carry}(u,U) =(mU+u) \mmod{3m}.
\end{equation}

That is, $x$ is recovered modulo $3m$ from its quotient $U$ and remainder $u$ of division by $m$.
Eq.~\eqref{carry-recover} defines the inverse map to the map $\Phi_{\rm Carry}$ defined by Eqs.~\eqref{u-proj} and \eqref{carry-sc} in Sec.~\ref{ssec:discr}.

The correspondence
$ \ZZ_m\times \ZZ_3\ni (u,U)\;\leftrightarrow\; x\in \ZZ_{3m}$
is a bijection of {sets}.

This recovery recipe is applicable regardless of divisibility of $m$ by $3$ and
is easier than CRT-based recovery in Section~\ref{sec:mod}.
The computational complexity is also lower: $O(1)$ per entry as opposed to $O(\log m)$ in the Mod scenario.

The price to pay is the loss of homomorphic nature of the operations $\myplus$, $\myminus$.

In the Mod scenario, given the pairs $(u_1, U_1)$ and $(u_2, U_2)$ and the corresponding values $x_1$, $x_2$ such that  
$x_i\equiv u_i\pmod{m}$, $x_i\equiv U_i\pmod{3}$ ($i=1,2$), we know that the pair $(u_1+u_2, U_1+U_2)$
corresponds to $x_3=x_1+x_2\mmod {3m}$, that is, $x_3\equiv u_1+u_2\pmod{m}$, $x_3\equiv U_1+U_2\pmod{3}$.

Let us see what happens when formula \eqref{carry-recover} is used.

Suppose $x_i=mU_i+u_i$, $i=1,2$, where $U_i\in\{0,1,2\}$ and $0\leq u_i\leq m-1$.
We have
$$
 x_1+x_2=m(U_1+U_2)+(u_1+u_2).
$$
After reduction modulo $3m$, putting $x=x_1+x_2\mmod 3m$ and $u=u_1+u_2\mmod m$, we get
$$
x=\begin{cases}
 m\cdot (U_1+U_2 \mmod 3)+u &\;\text{if $u_1+u_2<m$},\\[.5ex]
 m\cdot (U_1+U_2+1\mmod 3)+u &\; \text{if $u_1+u_2\geq m$}.
\end{cases}
$$

Let us introduce the operation $\oplus_c$, ``addition with carry'' on $\ZZ_m\times\ZZ_3$,
where $\ZZ_m$ and $\ZZ_3$ are treated just as the sets $\{0,1,\dots,m-1\}$ and $\{0,1,2\}$,
respectively:
$$
 (u_1,U_1)\oplus_c(u_2,U_2)=\big(u_1+u_2\mmod m, \;\;(U_1+U_2+\sigma) \mmod 3\big), 
$$
where $\sigma=0$ if $u_1+u_2<m$ and $\sigma=1$ otherwise. 

The situation can be described by the commutative diagram
\begin{equation}
\label{diagram:carry}
 \begin{tikzcd}[column sep=tiny]
&&&[-2em](u_1, U_1) \ar[rr]  
 && x_1
 \\[-5ex]
\text{in $\ZZ_m\times\ZZ_3$}
&& \oplus_c \ar[dd,bend right] 
 &&&&[-1.4em]
 + \ar[dd,bend left] 
 &  \mmod 3m 
 \hspace{2.5em} 
  \\[-5ex]
 &&&
(u_2, U_2) \ar[rr]  
 && x_2
 \\
 &&\phantom{(u, U,\; } 
 \lefteqn{\hspace{-1.3em}(u,U)} 
 &{}\ar[rr]&&{}&
 \phantom{x} \lefteqn{\hspace{-1em}x}
 \end{tikzcd}
\end{equation}

As a matter of fact, the \underline{set}\ $\ZZ_m\times \ZZ_3$ is
still an abelian group under the operation $\oplus_c$ as seen from the right side of the diagram, but
the group operation ``addition modulo $3m$'' is not homomorphic to the canonical addition in the direct sum
of the groups $\ZZ_m\oplus \ZZ_3$.

Similarly, subtraction involves carries in the following way: if $x_i$, $u_i$, $U_i$ are related as above and
if $x=x_1-x_2\mmod 3m$, $u=u_1-u_2\mmod m$, then
$$
x=\begin{cases}
 m\cdot (U_1-U_2\mmod 3)+u &\;\text{if $u_1-u_2\geq 0$},\\[.5ex]
 m\cdot (U_1-U_2-1 \mmod 3)+u &\; \text{if $u_1-u_2<0$}.
\end{cases}
$$

The algorithm of setting up an MSP (Step I of strong starter construction)
mostly follows that described in Sec.~\ref{ssec:3basic}
with modifications reflecting the presence of carries.

\smallskip
Given the triplication table $\Sigma_m$, $m=2q+1$, we first compute two auxiliary tables. 
To every pair $(u_i,v_i)$, $0\le i\le 3q$, of 
$\Sigma_m$ we put in correspondence the
$i$-th {\em difference carry}
$$
\delta_i=\begin{cases}
 0 & \text{if $u_i-v_i\geq 0$},\\
1 & \text{if $u_i-v_i<0$},
\end{cases}
$$
 and the
$i$-th {\em summation carry}
$$
\sigma_i=\begin{cases}
 0 & \text{if $u_i+v_i<m$},\\
1 & \text{if $u_i+v_i\geq m$}.
\end{cases}
$$
Then for $0\le i\le 3q$ the operations $\myminus$ and $\myplus$ are defined by the rules
\begin{equation}
 \label{omop1}
 U_i\myminus V_i= U_i-V_i-\delta_i \mmod 3, \quad  
 \quad U_i \myplus V_i= U_i+V_i +\sigma_i \mmod 3.
 \end{equation}

Hence the difference carries are used to set up the Row Constraints ((1)), while the summation carries are used to set up the Weak Set Constraints ((2)). 
The Color Constraints ((3)) do not involve the operations $\myplus$, $\myminus$
and do not depend on the precomputed carries. The Consistency Constraints ((4)) are vacuous.

Similar to Proposition~\ref{prop:main-mod}, we give an explicit formulation of 
the abstract Theorem~\ref{thm:main} adapted to the Carry scenario. 

\begin{proposition}
\label{prop:main-carry}
Let $\Sigma_m$ be a triplication table. 
Suppose, in Step I of the triplication process, the Sudoku problem is set up using the operations $\myminus$, $\myplus$ given by (\ref{omop1}).
Suppose in Step II a table $\Sigg_{3}$ congruous with  $\Sigma_m$ is found.
If the pairing $S$  is constructed by formulas (\ref{recover-starter}) 
with $F_{\rm Carry}(u,U)$ given by \eqref{carry-recover} and  $(u_i,v_i) \in \Sigma_m$, 
$(U_i,V_i) \in \Sigg_{3}$, $0\le i\le \frac{3(m-1)}2$, then $S$ is a strong starter in $\ZZ_{3m}$. 
\end{proposition}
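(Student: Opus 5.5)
The plan is to deduce Proposition~\ref{prop:main-carry} from the abstract Theorem~\ref{thm:main}. For that, it suffices to check that the Carry scenario is a legitimate discrimination scenario in the sense of Section~\ref{ssec:discr}, that it satisfies assumption \eqref{F00}, and that the operations \eqref{omop1} agree with the abstract definitions \eqref{myplusmin} for every aligned pair $(u_i,U_i)$, $(v_i,V_i)$. Once these three points are in place, a table $\Sigg_3$ satisfying ((0))--((4)) with \eqref{omop1} is congruous with $\Sigma_m$ in the sense of Definition~\ref{def:mod-Sudoku}. Theorem~\ref{thm:main} then gives the conclusion directly.

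\emph{Scenario and \eqref{F00}.} Take $f_{\rm Carry}(x)=\lfloor x/m\rfloor$ with $x$ represented in $\{0,\dots,3m-1\}$, so that $\Zset=\ZZ_3$. The map $\Phi_{\rm Carry}:x\mapsto(x\mmod m,\lfloor x/m\rfloor)$ is the division-with-remainder bijection $\{0,\dots,3m-1\}\to\{0,\dots,m-1\}\times\{0,1,2\}$. Hence $\Omega=\ZZ_m\times\ZZ_3$, the constraint ((4)) is vacuous, and the inverse is exactly $F_{\rm Carry}$ from \eqref{carry-recover}. Moreover $f(0)=0$, so \eqref{F00} holds.

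\emph{Matching the operations.} This is the only substantive step. Let $x=mU+u$ and $y=mV+v$ with $0\le u,v<m$ and $U,V\in\{0,1,2\}$. Then $x+y=m(U+V)+(u+v)$. If $u+v<m$, the quotient of $(x+y)\mmod 3m$ by $m$ is $(U+V)\mmod 3$. If $m\le u+v<2m$, one unit of $m$ carries over and the quotient becomes $(U+V+1)\mmod 3$. Reduction modulo $3m$ only changes the quotient modulo $3$, so in both cases $f(F(u,U)+F(v,V))=U+V+\sigma\mmod 3$. Similarly, $x-y=m(U-V)+(u-v)$. When $u-v<0$ we borrow, writing $u-v=(u-v+m)-m$, and the quotient becomes $U-V-1\mmod 3$.

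Here is a point to handle carefully. The difference carry $\delta_i$ is defined by the sign of $u_i-v_i$. The Row Constraints ((1)) in the abstract setting use $U_i\myminus V_i=f(x_i-y_i)$, where the order of subtraction matches the order of arguments. So \eqref{omop1} is precisely $f(F(u_i,U_i)-F(v_i,V_i))$ with borrow $\delta_i=[u_i<v_i]$, which agrees with \eqref{myplusmin}. (The directed differences in Definition~\ref{def:3table} are $v-u$, but Proposition~\ref{prop:xyz}(b) only needs the same convention to be used consistently within each row. The hypothesis that $u_i-v_i$ is constant modulo $m$ along a regular row holds for either orientation.)

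With both operations identified, Proposition~\ref{prop:xyz} applies verbatim, and the three steps of the proof of Theorem~\ref{thm:main} go through as written: ((3)) together with $F(0,0)=0$ gives the partition of $\ZZ_{3m}^*$, ((1)) gives the differences, and ((2)) gives distinct nonzero sums. I expect no real obstacle beyond this bookkeeping of carries and borrows; the only delicate issue is getting the sign convention in $\delta_i$ to agree with the orientation used in ((1)).
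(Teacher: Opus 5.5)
Your proposal is correct and follows the paper's route: the paper presents this proposition as Theorem~\ref{thm:main} specialized to the Carry scenario. The text just before the statement supplies the same three ingredients you check, namely the bijectivity of $\Phi_{\rm Carry}$ with inverse $F_{\rm Carry}$ (so ((4)) is vacuous), and the carry/borrow derivation showing that \eqref{omop1} agrees with \eqref{myplusmin}. Your remark that $\delta_i$ must follow the orientation $u_i-v_i$, consistent with $U_i\myminus V_i=f(x_i-y_i)$, rather than the directed difference $v-u$, is a correct clarification that the paper leaves implicit.
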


\begin{example}
\label{ex:carry_m7}
The input data in this example are the same as in the example presented in Sections~3--5 of \cite{OSK25a}.
We take the base starter $T=[(2,3),(4,6),(1,5)]$ of order $7$ and key $\akey=1$, which is admissible.
Here is the triplication table $\Sigma_7=\Sigma_7(T,1)$ found using formulas (\ref{3table-gen}). 
The entries where the difference carries are nontrivial $(\delta_i=1)$
are starred

\bigskip

\centerline{
	$\Sigma_7$=\begin{tabular}{|c||c||c|}
		\hline 
		&$(1, 1)$ &   \\
		\hline
		 $(2, 3)^*$  &  $(3, 4)^*$&  $(5, 6)^*$\\ 
		\hline
		  $(4, 6)^*$ &  $(5, 0)$&  $(2, 4)^*$ \\ 
		\hline
		 $(1,  5)^*$ &  $(2, 6)^*$&  $(3, 0)$ \\ 
		\hline
	\end{tabular}}
	
\bigskip
	
Thus, 
\begin{equation}
\label{del}
\delta_i=\begin{cases}
 0 & \text{for $\;i\in \{0, 5, 9\} $},\\
1 & \text{for $\;i\in \{1,2,3,4,6,7,8\} $}.
\end{cases}
\end{equation}

The weak sets and the weak pair sets (see Definition~\ref{def:wset}) are as follows. 
Starred are those weak pairs for which the summation carries are nontrivial $(\sigma_i=1)$:
$$
\begin{array}{lcl}
W_0=\{2\} & \to & \Wp_0= \{(3,4)^*\}\\
W_3=\{4,9\} & \to & \Wp_3=\{(4,6)^*,\ (3,0)\} \\
W_5=\{1,5\} & \to &\Wp_5=\{(2,3),\ (5,0)\}\\
W_6=\{6,7\} & \to &\Wp_6=\{(2,4),\ (1,5)\}.
\end{array}
$$
Thus, 
\begin{equation}
\label{sig}
\sigma_i=\begin{cases}
 0 & \text{for $\;i\in \{1,5,6,7,9\} $},\\
1 & \text{for $\;i\in \{2,4\} $}.
\end{cases}
\end{equation}

The strong pairs are $(1,1), (5,6), (2,6)$.

Monochrome sets mapped to the subsets of indeterminates, are:
$$
\begin{array}{lclcl}
M_0=\{V_5,V_9\} &&&&\\
M_1=\{U_0, V_0, U_7\} &&
M_3=\{V_1, U_2, U_9\} &&
M_5=\{U_3, U_5, V_7\}\\
M_2=\{U_1, U_6, U_8\} &&
M_4=\{V_2, U_4, V_6\} &&
M_6=\{V_3, V_4, V_8\}
\end{array}
$$

A solution  (one of many) of MSP (see Definition~\ref{def:mod-Sudoku}) is presented  below:

\bigskip

\centerline{
	$\Sigg_3
	(\rm Carry)=$
	\begin{tabular}{|c||c||c|}
		\hline 
		&$(0, 1)$ &   \\
		\hline
		 $(2, 0)$  &  $(2, 2)$&  $(2, 1)$\\ 
		\hline
		  $(1, 2)$ &  $(1, 2)$&  $(1, 0)$ \\ 
		\hline
		 $(2,  0)$ &  $(0, 0)$&  $(1, 1)$ \\ 
		\hline
	\end{tabular}
}

\bigskip	
Let us demonstrate that the constraints ((1)) and ((2)) are satisfied.

Table~\ref{tab:carries_m7} (left) shows the differences $U\myminus V$, see \eqref{omop1}, \eqref{del}. 
We see that, indeed, in each row they are all distinct modulo 3, namely, 1, 2, 0, in that order. 

Table~\ref{tab:carries_m7} (right) shows the sums $U\myplus V$, see \eqref{omop1}, \eqref{sig} for each weak pair. 
The sums modulo 7 of the  weak pairs of  TT are shown in parentheses. We see that (i) the sums $U\myplus V$ modulo 3
 corresponding to equal sums in TT are distinct, and (ii) 
and the sum $U_2\myplus V_2$ corresponding to the weak pair $(3,4)$ from $\Wp_0$ is nonzero.

In addition, the monochrome sets  $M_0=\{1,2\}$  and $M_k=\{0,1,2\}$, $1\le k\le 6$, are as required by the constraints ((3)).

\begin{table}
{\small
	\begin{tabular}{|c||c||c|}
		\hline 
		&$0-1$ &   \\
		\hline
		 $2-0-1$  &  $2-2-1$&  $2-1-1$\\ 
		\hline
		  $1-2-1$ &  $1-2$&  $1-0-1$ \\ 
		\hline
		 $2-0-1$ &  $0-0-1$&  $1-1$ \\ 
		\hline
	\end{tabular}
	\hspace{1em}
	\begin{tabular}{|c||c||c|}
		\hline 
		&strong &   \\
		\hline
		 $2+0\quad(5)$  &  $2+2+1\,(0)$&  strong\\ 
		\hline
		  $1+2+1\;(3)$ &  $1+2\;\quad(5)$&  $1+0 \;(6)$ \\ 
		\hline
		 $2+0\quad(6)$ &  strong&  $1+1 \;(3)$ \\ 
		\hline
	\end{tabular}
	}
\caption{Verification of the constraints ((1)) and ((2)) in Example~\ref{ex:carry_m7}}	
\label{tab:carries_m7}	
\end{table}	
	
The final table  presents the pairs $[(x_i,y_i)]_{i=0}^{9}$ recovered by the formulas
$x_i=7U_i+u_i$, $y_i=7V_i+v_i$ from the tables $\Sigma_7$ and 
$\Sigg_3 (\rm Carry)$ given above.

\bigskip
\centerline{
	\begin{tabular}{|c||c||c|}
		\hline 
		&$(1, 8)$ &   \\
		\hline
		 $(16, 3)$  &  $(17, 18)$&  $(19, 13)$\\ 
		\hline
		  $(11, 20)$ &  $(12, 14)$&  $(9, 4)$ \\ 
		\hline
		 $(15,  5)$ &  $(2, 6)$&  $(10, 7)$ \\ 
		\hline
	\end{tabular}
}

\bigskip

\end{example}

The resulting starter is read off this table:
$S=\{\{1,8\},\{16,3\},\dots, \{10,7\}\}$.
It coincides with that given at the end of \cite{OSK25a} (Appendix, Output). 
The coincidence is explained plainly: we calculated the above solution
of the Carry version of MSP from that starter known beforehand.
As an alternative we could have used an independently found solution of the described MSP.
In general, the method of this section and that of Section~\ref{sec:mod} 
yield the same set of ``triplicated'' starters as follows from Theorem~\ref{thm:scenario-independence}.

\section{The solvability of the Sudoku problem} 
\label{sec:sudoku-solvability}


\subsection{Empirical evidence of solvability}
\label{ssec:sudoku-solv}

As it has already been mentioned, we have no theory as regards solution of the Modular Sudoku Problem  (Step II of triplication process).

In practice, computations for this work as well as for \cite{OSK25a} were done with Python programs based on
the powerful SAT solver \zsat. 

The computational experience has led us to believe that in most cases, given a TT formed according
to Definition~\ref{def:3table}, the MSP
does have a solution and therefore a ``triplicated'' strong starter can be produced.

In this section we illustrate this empirical observation by examples and, to some extent, offer more precise
formulations. 


In Sec.~\ref{ssec:ex-sudoku-special} we consider ``theoretical'' types of triplication tables introduced
in Section~\ref{sec:construcion-tt}. Then, in Sec.~\ref{ssec:ex-sudoku-random} we consider ``random'',
or general,  TTs.

Another object of interest in Sec.~\ref{ssec:ex-sudoku-special} is the set of admissible key values, which has no analog
in the most general case.

\subsection{Sudoku problem for special triplication tables } 
\label{ssec:ex-sudoku-special}

As pointed out earlier, we are able to predict admissible keys only for the construction described in Sec.~\ref{ssec:basic-3}
and assuming the base starter is strong --- see Proposition~\ref{prop:num-good-keys}. 
For other triplication templates considered in Section~\ref{sec:construcion-tt} the number of admissible key values can vary. 

In examples~\ref{ex:ord11-all_disjoint} and~\ref{ex:ord19-keys_table} below
we illustrate this phenomenon for the construction described in Section~\ref{ssec:3base_starters}, using
different selections of three out of four fixed strong starters of orders 11 and 19.
The last example~\ref{ex:pseu} illustrates the situation for a construction described in Section~\ref{ssec:epicycloidal}.

In these and all other computed examples based on templates described in Section~\ref{sec:construcion-tt},
whenever $|K|\ne 0$, a solution of the MSP with $\akey\in K$ did exist.
This observation prompts the following conjecture. 

\begin{conjecture}
\label{conj:sudoku-param1}
Let $\Sigma_m=\Sigma_m(T_0,T_1,T_2,\akey)$,
$\akey\in K(T_0,T_1,T_2)$,  be a triplication table obtained from a triplication template as defined in Definition~\ref{def:key-admissible}.
Then $\Stset{}{\Sigma_m}\neq\emset$. 
\end{conjecture}

Further systematic numerical experiments are needed for a more definitive judgement about validity of this conjecture. 

\begin{example}
\label{ex:ord11-all_disjoint}
We fix four ordered pairwise disjoint strong starters of order 11:
$$
\begin{array}{l}
R_1=[(1, 2), (7, 9), (3, 6), (4, 8), (5, 10)], \\
R_2=[(2, 3), (5, 7), (6, 9), (8, 1), (10, 4)], \\
R_3=[(9, 10), (2, 4), (5, 8), (3, 7), (1, 6)], \\
R_4=[(8, 9), (4, 6), (2, 5), (10, 3), (7, 1)].
\end{array}
$$

For various combinations of three subscripts $i, j, k\in \{1,2,3,4\}$ we have calculated the sets of admissible keys
$K(R_{i},R_{j},R_{k})$. The necessary solvability condition requires that $j\neq k$ and
the construction is symmetric with respect to interchanging the second and the third starters in the triple. 
(Cf.~Remark~\ref{rem:sym}.)
Taking the symmetry into account, any combination can be reduced to one of the following. First,   
\begin{equation}
\label{empty}
K(R_{1},R_{2},R_{3})=K(R_{1},R_{2},R_{4})=K(R_{3},R_{1},R_{4})=K(R_{3},R_{2},R_{4})=
\emset.
\end{equation}
For all remaining cases where $j\ne k$, we obtain $|K(R_{i},R_{j},R_{k})|=5$.

Note that $R_3=R'_1$ and  $R_4=R'_2$. Hence the  combinations $i=j=1, k=3$ and $i=j=2, k=4$ correspond to the  
simplest triplication template defined in Sec.~\ref{ssec:basic-3}.
The result $|K(R_{1},R_{1},R_{3})|=|K(R_{2},R_{2},R_{4})|=5$ is consistent with 
Corollary \ref{cor:keys-case1}(b).
\end{example}

\begin{example}
\label{ex:ord19-keys_table}
In this example we fix four strong starters of order 19:
$$
\begin{array}{l}
S_1=[(15, 16), (4, 6), (10, 13), (8, 12), (2, 7), (14, 1), (17, 5),
(3, 11), (9, 18)], \\
S_2=[(2, 3), (16, 18), (14, 17), (5, 9), (6, 11), (7, 13), (8, 15),
(4, 12), (1, 10)], \\
S_3=[(13, 14), (8, 10), (2, 5), (16, 1), (4, 9), (11, 17), (18, 6),
(7, 15), (3, 12)], \\
S_4=[(11, 12), (4, 6), (17, 1), (9, 13), (3, 8), (10, 16), (14, 2),
(18, 7), (15, 5)].
\end{array}
$$

For various combinations of three subscripts $i,j,k\in\{1,2,3,4\}$ we exhibit in Table~\ref{table:ak} the sets of admissible keys
$K(S_i,S_j,S_k)$. The necessary solvability condition requires that $j\neq k$ and
the construction is symmetric with respect to interchanging columns 1 and 2.
The first half of the table covers the cases where $i=j$; in the second half 
we assume that $i,j,k$ are distinct and $j< k$. 

Admissible keys are marked by crosses.
The last column shows the number of admissible keys, $|K|=|K(S_i,S_j,S_k)|$.

\begin{table}
\centerline{
\setlength{\tabcolsep}{4.5pt}
\small
\noindent
\begin{tabular}{|c|cccccccccccccccccc|c|}
\hline

$i$, $j$, $k$ & 1 & 2 & 3 & 4 & 5 & 6 & 7 & 8 & 9 & 10 & 11 & 12
& 13 & 14 & 15 & 16 & 17 & 18 & $|K|$
\\ \hline

1 1 2 &x&x&&x&x&x&&&&x&x&x&&x&&x&x&& 11 
\\

1 1 3 &x&&&x&x&&x&&x&x&&x&x&x&&x&&& 10 
\\

1 1 4 &&&&&&&&&&&&&&&&&&& 0 
\\

2 2 1 &&x&x&&x&&x&x&x&&&&x&x&x&&x&x& 10 
\\

2 2 3 &x&&x&x&x&x&x&&&x&x&&x&x&&&&x& 11 
\\

2 2 4 &x&x&&x&&x&x&x&x&&x&&&x&&&x&x& 11 
\\

3 3 1 &&&x&&x&x&x&&x&x&&x&&x&x&&&x& 10 
\\

3 3 2 &x&&&&x&x&&x&x&&&x&x&x&x&x&&x& 11 
\\

3 3 4 &&&x&&x&x&&&x&x&x&x&x&x&x&x&x&x& 13 
\\

4 4 1 &&&&&&&&&&&&&&&&&&& 0 
\\

4 4 2 &x&x&&&x&&&x&&x&x&x&x&&x&&x&x& 11 
\\

4 4 3 &x&x&x&x&x&x&x&x&x&x&&&x&x&&x&&& 13 
\\

\hline

1 2 3 &x&&&x&x&&&&&x&&x&&x&&x&&& 7 
\\

1 2 4 &x&x&&&x&x&&&&x&x&&&x&&x&x&& 9 
\\

1 3 4 &x&&&&x&&x&&x&x&&&&x&&x&&& 7 
\\

2 1 3 &&&x&&x&&x&&&&&&x&x&&&&x& 6 
\\

2 1 4 &&&&&&&&&&&&&&&&&&& 0 
\\

2 3 4 &x&&&x&&x&x&&&&x&&&x&&&&x& 7 
\\

3 1 2 &&&&&x&x&&&x&&&x&&x&x&&&x& 7 
\\

3 1 4 &&&&&&&&&&&&&&&&&&& 0 
\\

3 2 4 &&&&&x&x&&&x&&&x&x&x&x&x&&x& 9 
\\

4 1 2 &&x&&&x&&&x&&x&x&x&x&&&&x&x& 9 
\\

4 1 3 &&x&x&x&x&&&x&x&x&&&x&x&&&&& 9 
\\

4 2 3 &x&x&&&x&&&x&&x&&&x&&&&&& 6 
\\
\hline

\end{tabular}
}
\caption{ Admissible keys for TTs constructed from starters in Example~\ref{ex:ord19-keys_table}}
 \label{table:ak}
 \end{table}
\end{example}

The next example deals with triplication tables constructed from epicycloidal pseudostarters (Sec.~\ref{ssec:epicycloidal}).

\begin{example}
\label{ex:pseu}
Consider  the triplication table $\Sigma_7$ found  earlier in Example~\ref{psev-tt}.
One table $\Sigg_3$ congruous with $\Sigma_7$ (uniqueness is not claimed) in the Mod scenario
is presented in Table~\ref{table:7psi} (left). 
The corresponding starter of order 21 recovered by CRT
from $\Sigma_7$ and $\Sigg_3$  can be read from Table~\ref{table:7psi} (right): $S=\{\{10,3\},\{9,17\}, \dots, \{18,7\}\}$.

\begin{table}
\centerline{
	\begin{tabular}{|c||c||c|}
		\hline 
		&$(1, 0)$ &   \\
		\hline
		 $(0, 2)$  &  $(1, 1)$&  $(0, 1)$\\ 
		\hline
		  $(2, 1)$ &  $(2, 2)$&  $(0, 1)$ \\ 
		\hline
		 $(0, 2)$ &  $(2, 2)$&  $(0, 1)$ \\ 
		\hline
	\end{tabular}
\hspace{2em}
	\begin{tabular}{|c||c||c|}
		\hline 
		&$(10, 3)$ &   \\
		\hline
		 $(9, 17)$  &  $(4, 19)$&  $(15, 16)$\\ 
		\hline
		  $(11, 13)$ &  $(5, 14)$&  $(6, 1)$ \\ 
		\hline
		 $(12, 8)$ &  $(20, 2)$&  $(18, 7)$ \\ 
		\hline
	\end{tabular}
}
\caption{A table $\Sigg_3$ congruous with $\Sigma_7$ (Example~\ref{ex:pseu}) and the corresponding
strong starter in $\ZZ_{21}$}
\label{table:7psi}
\end{table}

More  templates with the same starter $T_0$ of Example~\ref{psev-tt} have been studied.
For all available multipliers $(\mu=2,3,4,5)$, we constructed templates with pseudostarters $T_1=\Epic{(7)}(\mu)$ 
and $T_2=T'_1$.
The corresponding admissible keys $\akey\in K(T_0,T_1,T'_1)$  
and sample solutions (ordered strong starters of order 21) are presented in Table~\ref{table:ps}.

\bigskip
\begin{table}
\centerline{
\begin{tabular}{|c| c|}
\hline		 
 $\akey$  &  Sample solution  \\
\hline
\multicolumn{2}{|c|}{$\mu=2, \quad T_1=[(1,2), (2,4), (3,6)]$} \\
\hline
$3$ & $[(10, 3), (9, 17), (4, 19), (15, 16), (11, 13), (5, 14), (6, 1), (12, 8), (20, 2), (18, 7)]$ \\
$5$ & $[(19, 12), (16, 17), (13, 7), (3, 11), (18, 6), (14, 9), (8, 10), (5, 1), (15, 4), (20, 2)]$ \\
$6$ & $[(20, 6), (2, 10), (14, 15), (18, 12), (11, 13), (1, 17), (16, 4), (5, 8), (9, 19), (7, 3)]$ \\
\hline
\multicolumn{2}{|c|}{$\mu=3, \quad T_1=[(4, 5), (1, 3), (5, 1)]$} \\
\hline
$1$ & $[(15, 8), (16, 17), (12, 6), (10, 18), (11, 13), (9, 4), (19, 7), (5, 1), (20, 2), (14, 3)]$ \\
$2$ & $[(16, 2), (9, 3), (20, 7), (18, 19), (4, 13), (10, 5), (6, 8), (12, 1), (14, 17), (15, 11)]$ \\
$4$ & $[(4, 11), (2, 3), (15, 9), (20, 7), (18, 13), (5, 14), (8, 10), (19, 1), (16, 12), (17, 6)]$ \\
\hline
\multicolumn{2}{|c|}{$\mu=4, \quad T_1=[(5, 6), (3, 5), (1, 4)]$} \\
\hline
$3$ & $[(3, 17), (9, 10), (15, 2), (11, 5), (4, 20), (6, 8), (19, 7), (12, 1), (18, 14), (13, 16)]$ \\
$5$ & $[(5, 19), (9, 17), (10, 4), (6, 7), (18, 13), (15, 3), (14, 16), (12, 8), (20, 2), (1, 11)]$ \\
$6$ & $[(6, 13), (9, 3), (11, 12), (7, 15), (18, 20), (16, 4), (1, 17), (19, 8), (14, 10), (2, 5)]$ \\
\hline
\multicolumn{2}{|c|}{$\mu=5, \quad T_1=[(2, 3), (4, 6), (6, 2)]$} \\
\hline
$1$ & $[(1, 8), (9, 3), (17, 18), (12, 20), (4, 6), (5, 14), (16, 11), (19, 15), (7, 10), (13, 2)]$ \\
$2$ & $[(16, 2), (9, 10), (11, 19), (13, 7), (18, 6), (20, 15), (3, 5), (12, 1), (8, 4), (14, 17)]$ \\
$4$ & $[(4, 18), (16, 17), (20, 14), (1, 9), (11, 13), (15, 3), (12, 7), (19, 8), (10, 6), (2, 5)]$ \\
\hline
\end{tabular}
}
\caption{Sample strong starters in $\ZZ_{21}$ congruous with $\Sigma_7(T_0, T_1,T'_1,t)$
for $T_1=\Epic{(7)}(\mu)$ and all admissible keys}
 \label{table:ps}
 \end{table}
\end{example}

\begin{remark}
In Example~\ref{ex:pseu}, the number of admissible keys was always equal to $(m-1)/2$. Our numerical experiments 
show that it is often the case when $m$ is prime, $T_0$ is a strong starter,
 and $T_2=T'_1$ are epicycloidal pseudostarters. 
However, there are exceptions. For example, if $m=13$, $T_0=[(3, 4), (6, 8), (9, 12), (10, 1), (2, 7), (5, 11)]$, and $\mu=3$, there are only 3 admissible keys: $K=\{4,10,12\}$. 
\end{remark}

\subsection{Sudoku problem for a general triplication table} 
\label{ssec:ex-sudoku-random}

In this section we discuss triplication tables whose columns are formed by balanced triples of pseudostarters (Definition~\ref{def:3-balanced}) but do not fall under any of the explicit constructions 
described in Section~\ref{sec:construcion-tt}. 

\begin{example}
\label{ex:wild-table}

The starter is the same as in Example~7.7 of \cite{OSK25a},
$$
S=\{\{13, 12\}, \{19, 17\}, \{7, 4\}, \{10, 14\}, \{15, 20\}, \{3, 9\}, \{1, 8\}, \{5, 18\}, \{11, 2\}, \{16, 6\}\}.
$$
In \cite{OSK25a} we showed that it cannot be obtained by triplication from any table $\Sigma_7(T,t)$,
which in the present paper is defined in Sec.~\ref{ssec:basic-3}.

Let us prove a stronger fact: the reduction of $S$ modulo 7 cannot be arranged in a table $\Sigma_7$ in accordance with rules of Section~\ref{sec:construcion-tt} (Definition~\ref{def:3-template}). 

It is already noted in \cite{OSK25a} that the key value must be $\akey=1$, coming from the pair $(1,8)$.
The rows of the regular part of a hypothetical $\Sigma_7$ must contain pairs as follows:
$$
\ba{ll c l}
\text{for difference $1$:} & (5,6) & (2,3) & (4,5)\\ 
\text{for difference $2$:} & (3,5) &(2,4) &(6,1) \\
\text{for difference $3$:} & (6,2) &(4,0) &(0,3) 
\ea
$$
(We have identified the composition of the rows but we do not know in which column every pair belongs.)

The pair $(6,1)$ in the 2nd row must be in column 0, since otherwise column 0 would not contain 1.
Also, the pair $(6,2)$ in the 3rd row must be in column 0, since the other two pairs of that row contain 0. 
We see that there will be at least two $6$s in column 0 of $\Sigma_7$; so there can be no partition (and therefore no starter) in that column. 

Nevertheless, the nature of our Definition~\ref{def:3table-s}  ensures that
a triplication table induced by $S$ exists. 
One such table $\Sigma^S_7$ is displayed as Table~\ref{table:7-3-wild} (left).
(Other possible tables from the same equivalence class can be obtained by arbitrary permutations of pairs
within rows, cf.~Remark~\ref{rem:non-unique-sigma-m}.)

Table~\ref{table:7-3-wild} (middle) displays the appropriately arranged table $\Sigg^S_3$ in Mod scenario, 
Table~\ref{table:7-3-wild} (right) is the original starter interpreted as the pairing congruous with $\Sigma^S_7$ via $\Sigg^S_3$
(cf.~Definition~\ref{def:mod-Sudoku-starter}).
 
Another solution of the same MSP is presented in Table~\ref{table:7-3-wild2} (middle).
It is a table $\Sigg_3$ congruous to $\Sigma_7^S$ but different from $\Sigg^S_3$. 
Table~\ref{table:7-3-wild2} (right) gives a strong starter of order 21 congruous with $\Sigma_7^S$ via this $\Sigg_3$ and different from $S$.

\begin{table}
 
\noindent
\begin{tabular}{|c|c|c|}
\hline
&(1,1)&\\
\hline
(5,6)&(2,3)&(4,5)\\
\hline
(3,5)&(2,4)&(6,1)\\
\hline
(6,2)&(4,0)&(0,3)\\
\hline
\end{tabular}
\hspace{0.1em}
\begin{tabular}{|c|c|c|}
\hline
&(1,2)&\\
\hline
(0,1)&(0,0)&(0,2)\\
\hline
(2,1)&(2,2)&(2,0)\\
\hline
(0,1)&(1,1)&(2,1)\\
\hline
\end{tabular}
\hspace{0.1em}
\begin{tabular}{|c|c|c|}
\hline
&(1,8)&\\
\hline
(12,13)&(9,3)&(18,5)\\
\hline
(17,19)&(2,11)&(20,15)\\
\hline
(6,16)&(4,7)&(14,10)\\
\hline
\end{tabular}
\caption{The tables $\Sigma^S_7$, $\Sigg^S_3$, and the given starter $S$ of Example~\ref{ex:wild-table}} 
\label{table:7-3-wild}
\end{table}

\begin{table}

\noindent
\begin{tabular}{|c|c|c|}
\hline
&(1,1)&\\
\hline
(5,6)&(2,3)&(4,5)\\
\hline
(3,5)&(2,4)&(6,1)\\
\hline
(6,2)&(4,0)&(0,3)\\
\hline
\end{tabular}
\hspace{0.1em}
\begin{tabular}{|c|c|c|}
\hline
&(2,0)&\\
\hline
(0,0)&(2,1)&(0,1)\\
\hline
(0,2)&(0,1)&(1,1)\\
\hline
(2,1)&(2,2)&(1,2)\\
\hline
\end{tabular}
\hspace{0.1em}
\begin{tabular}{|c|c|c|}
\hline
&(8,15)&\\
\hline
(12,6)&(2,10)&(18,19)\\
\hline
(3,5)&(9,4)&(13,1)\\
\hline
(20,16)&(11,14)&(7,17)\\
\hline
\end{tabular}

\caption{Another Sudoku solution from the same triplication table $\Sigma^S_7$}
\label{table:7-3-wild2}
\end{table}
\end{example}

Finally, we demonstrate that an extension of Conjecture~\ref{conj:sudoku-param1} to the set of all possible triplication tables 
is false. 

\begin{example}
\label{ex:counterex-conj-solv}
Two examples of TTs with $m=11$ and $m=13$ 
for which the Modular Sudoku Problem does not have a solution are shown in Table~\ref{table:tt-nosol}.
These  examples were found using 
randomized numerical construction of general triplication tables by \zsat-based program where constraints specified
by Definition~\ref{def:3table} were implemented.
 \end{example}

\begin{table}[ht]
 \centerline{
\begin{tabular}{|c|c|c|}
\hline
&(7,7)&\\
\hline
(1,2)&(10,0)&(7,8)\\
\hline
(4,6)&(3,5)&(1,3)\\
\hline
(6,9)&(1,4)&(10,2)\\
\hline
(6,10)&(5,9)&(4,8)\\
\hline
(0,5)&(8,2)&(9,3)\\
\hline
\end{tabular}
\hspace{3em}
\begin{tabular}{|c|c|c|}
\hline
&(10,10)&\\
\hline
(3,4)&(10,11)&(4,5)\\
\hline
(9,11)&(3,5)&(12,1)\\
\hline
(1,4)&(8,11)&(5,8)\\
\hline
(2,6)&(9,0)&(3,7)\\
\hline
(2,7)&(9,1)&(7,12)\\
\hline
(2,8)&(6,12)&(0,6)\\
\hline
\end{tabular}
}
\caption{Triplication tables which do not yield a solvable Sudoku problem }
\label{table:tt-nosol}
\end{table}

\begin{remark}
While a generalization of Conjecture~\ref{conj:sudoku-param1} fails to be unconditionally true,
our numerical experiments show that ``problematic" TTs of general structure are rare. 
The great majority of randomly generated TTs do yield a strong starter. 
Table~\ref{table:counterex-gen} shows the number $N_{\emset}$ of TTs for which the MSP does not have a solution vs the total number $N$ of generated TTs for odd orders 5 to 19.

\begin{table}[ht]
\centerline{
\begin{tabular}{|c|c|c|c|c|c|c|c|c|}
\hline
$m$ & 5 & 7 & 9 & 11 & 13 & 15 & 17 & 19 \\
\hline
$N$ &  4000 & 4000 & 1000 & 3000 & 9000 & 1500 & 72000 & 45000\\
\hline
$N_{\emset}$ &  0 & 0 & 27 & 2 & 1 &  1 & 0 & 0 \\
\hline
\end{tabular}
}
\caption{The number of ``bad'' triplication tables in the generated samplings}
\label{table:counterex-gen}
\end{table}

The question of how one can take advantage of the flexibility offered by our definition of TT 
to ensure the solvability of MSP, requires further investigation. 
\end{remark}

\section{Conclusion}
\label{sec:conc}
In this paper we develop an approach for constructing strong starters in $\ZZ_{3m}$, where $m$ is an odd integer. This approach extends our previous method  \cite{OSK25a}, which was applicable only for $m$ coprime with 3.  
Here we broaden the concepts of the triplication table and the Modular Sudoku Problem so
that any strong starter of order $3m$ can potentially be found as a result of triplication.
New types of theoretical constructions of triplication tables are described. 
In  \cite{OSK25a} we employed the SAT solver \zsat\ for solving the Sudoku problem. 
Here we employ it also for the purpose of construction of a general, ``random'' triplication table. 
Notwithstanding the practical success of the triplication method, the existence of triplication tables
for any odd $m>3$ for which the Modular Sudoku Problem has a nonempty solution set remains unproven.

\section*{Appendix 1: Sets, multisets, tuples, datasets} 
\label{app:data}

\renewcommand{\thesection}{A1}

In plain language, sufficient for our purposes, a {\em set}\ is a
collection of elements without repetitions.

A multiset is a collection of elements possibly with repetitions.
Rigorously, a multiset is defined as a {\em multiplicity function}\ on 
the {\em set}\ of distinct elements, called {\em support}\ of the multiset, with values in $\NN$. 

For example, a multiset $\{1,5,1,2,4,2,2\}$ can be written as
$A=\{1^2,\, 2^3,\, 4^1,\,5^1\}$, where
the entries $x^{\nu}$ represent the function $\nu$ from the set
$\supp A=\{1,2,4,5\}$ to $\NN$.

The {\em union of multisets} $A$ and $B$ is the multiset
$A\uplus B$ with support $C=\supp A\cup \supp B$ and the multiplicity function
$\nu_C(x)=\nu_A(x)+\nu_B(x)$, where $\nu_A$ and $\nu_B$ are the multiplicity functions
of the multisets-summands extended by 0 from their natural domains to all of $C$.
For instance, $\{1^2, 2^1\}\uplus\{1^1, 3^2\}=\{1^3, 2^1, 3^2\}$.

A {\em tuple}, or {\em array}, is a linearly ordered collection
(indexed, if needed, by consecutive integers starting from 0 or 1 as
specified in particular situations). 

We use  braces $\{\,\}$ as delimiters for sets or multisets when they are given by lists of their elements,
and brackets $[\,]$  for tuples. In the special case of ordered pairs (2-element tuples) 
we use parentheses $(\,)$ as delimiters.

Thus a 3-tuple of ordered pairs may look like
$[(2,3), (0,2), (5,3)]$.

We use the term {\em pair} to mean either ordered (tuple) or unordered (set, multiset) pair,
distinguished by the delimiters used.
Note that $\{a,b\}=\{b,a\}$, but $(a,b)\neq (b,a)$ unless $a=b$.

A {\em dataset}\ is, generally, a recursively organized arrangement of tuples, that is, a tuple whose entries are either
``atomic'' elements (numbers) or datasets of smaller depth. 

Atomic elements in a dataset are referred to as {\em components}. 
Every component has a uniquely determined position (placeholder). A function that transforms atomic elements
to atomic elements naturally induces a {\em componentwise transformation}\ of datasets preserving their structure.

We do not pedantically define datasets in full generality. 
Examples will suffice.

A tuple of (ordered) pairs $T=[(1,15),(7,12),(4,10),(1,3)]$ is a dataset.  
Its bracketing structure is $[(\,),(\,),(\,),(\,)]$ and the components are $1,15,7,12,4,10,1,3$. The place where each component is situated in the hierarchical structure defined by bracketing is important.
For instance, the datasets $[(7,12),(1,15),(4,10),(1,3)]$ or $[(15,1),(7,12),(4,10),(1,3)]$ or $[1,15,7,12,4,10,1,3]$
are all distinct and different from $T$. The former two have the same bracketing structure as $T$, while the last one has
different bracketing structure. All four datasets (including $T$) have the same multiset of components. 

The term {\em pairing}\ in this paper can mean a (multi)set whose elements are unordered pairs
or a dataset which is a tuple of ordered pairs of numbers. The appropriate delimiters exclude possibility of confusion.

\section*{Appendix 2: Recovering a number from residues in the case of non-coprime moduli}
\label{app:genGCD}

\renewcommand{\thesection}{A2}

The number-theoretic problem \eqref{congmod3m}--\eqref{compat} is crucial for the variant of triplication 
considered in Section~\ref{sec:9etc}. 
Let us discuss its solvability and a method of solution.
Sufficiency of the necessary condition \eqref{compat} will follow from the solution algorithm
presented below, cf.~\cite[Sec.~3.5]{Jones_98}.

\smallskip
The method to find $x$ from the congruences \eqref{congmod3m} is as follows.

\smallskip
1. In the particular case where $u\equiv U\equiv 0\mmod 3^{\nu}$ one can divide the congruences by $3^\nu$
to get
$$
 x=3^{\nu} x', \qquad  x'\equiv u/3^\nu \mmod p,\quad  x'\equiv U/3^\nu \mmod 3,
$$
and $ x'$ is recovered by CRT, since $\gcd(p,3)=1$.

\smallskip
2. In the general case, let $\bar u$ be the common value of $u\mmod 3^\nu$ and $U\mmod 3^\nu$.
Then $\bar x=x-\bar u$ satisfies the congruences 
$$\bar x \,\equiv\, u-\bar u \!\pmod{m}, 
\qquad
\bar x\,\equiv\, U-\bar u \!\pmod{3^{\nu+1}},
$$
which go under the particular case described in item 1. Therefore, $x=\bar u+\bar x$ and  formulas (\ref{CRT-gen-1}),  (\ref{CRT-gen-2}) follow.

\smallskip
For reference, let us present a formal statement. 

\begin{lemma}
\label{lem:CRTgen}
Let $m,h$ be positive integers with $\gcd(m,h)=d$, $\lcm (m,h)=n$.
Given $u\in\{0,\dots,m-1\}$ and $U\in\{0,\dots,h-1\}$ such that $u\equiv U \!\!\pmod{d}$,
there is a unique $x\in\{0,\dots,n-1\}$ satisfying the congruences $x\equiv u \!\!\pmod{m}$, $x\equiv U \!\!\pmod{h}$.
\end{lemma}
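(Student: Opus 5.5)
The plan is to prove uniqueness and existence separately, and then to check that the solution is automatically defined modulo $n$ as an element of $\{0,\dots,n-1\}$.

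\emph{Uniqueness.} Suppose $x,x'\in\{0,\dots,n-1\}$ both satisfy the two congruences. Then $m\mid x-x'$ and $h\mid x-x'$, so $\lcm(m,h)=n$ divides $x-x'$. Since $|x-x'|<n$, we conclude $x=x'$. This step is routine.

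\emph{Existence.} I would follow the reduction of items 1--2 above in general form. By B\'ezout there are integers $a,b$ with $am+bh=d$. Write $U-u=kd$ with $k\in\ZZ$; this is possible precisely because of the hypothesis $u\equiv U\pmod d$. Set
$$x_0=u+kam.$$
Then $x_0\equiv u\pmod m$ trivially. Moreover
$$x_0=u+k(d-bh)=u+(U-u)-kbh=U-kbh\equiv U\pmod h.$$
Finally take $x=x_0\bmod n$. Reducing modulo $n$ preserves both congruences, since $m\mid n$ and $h\mid n$.

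The only point needing care is the place where compatibility is used, namely divisibility of $U-u$ by $d$, so that the B\'ezout combination can be scaled by the integer $k$. There is no real obstacle here.

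Alternatively, a counting argument works equally well. The map $\ZZ_n\to\{(u,U)\in \ZZ_m\times\ZZ_h: u\equiv U \ (\mathrm{mod}\ d)\}$ is injective by the uniqueness argument. Both sets have size $n=mh/d$, because the target consists of $m$ choices of $u$ times $h/d$ compatible choices of $U$. Hence the map is bijective.

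In the case $m=3^\nu p$, $h=3^{\nu+1}$, $d=3^\nu$, $n=3m$, this lemma confirms that $F_{\rm Mod}$ in \eqref{CRT-gen-2} is well defined on $\Omega$.
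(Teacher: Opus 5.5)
Your proof is correct, but despite announcing that you will follow items 1--2, you take a different route from the paper.

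\textbf{The paper's route.} The paper writes its argument only for $m=3^\nu p$, $h=3^{\nu+1}$, and reduces to the classical coprime CRT:
\begin{enumerate}
\item subtract the common residue $\bar u$ of $u$ and $U$ modulo $d$;
\item observe that every solution then has the form $x=\bar u+dx'$;
\item divide the congruences by $d$ to get $x'\equiv (u-\bar u)/d \pmod{m/d}$ and $x'\equiv (U-\bar u)/d \pmod{h/d}$, whose moduli ($p$ and $3$ here) are coprime;
\item apply CRT, which gives $x'$ modulo $n/d$ and hence $x$ modulo $n$.
\end{enumerate}
Uniqueness is inherited, implicitly, from the classical theorem.

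\textbf{Your route.} You construct the solution directly from a B\'ezout identity $am+bh=d$ as $x_0=u+kam$. You prove uniqueness separately from $\lcm(m,h)\mid x-x'$, or alternatively by injectivity plus the count $m\cdot(h/d)=n$.

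\textbf{What each buys.} Your argument is self-contained and proves the lemma in the generality stated; the coprime CRT is just the case $d=1$ rather than an input. The counting version also shows directly that $x\mapsto(x\bmod m,\,x\bmod h)$ is a bijection onto the compatible pairs, which is exactly the fact that $\Phi_{\rm Mod}$ is one-to-one with range $\Omega$. The paper's reduction, for its part, is what produces the explicit recipe \eqref{CRT-gen-1}--\eqref{CRT-gen-2}.

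One small overstatement in your closing remark: the lemma only guarantees that a unique solution exists. Identifying it with $\bar u+3^\nu x'$ still needs the short check, from the paper's multiplication by $3^\nu$, that this value satisfies \eqref{congmod3m}.
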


In our context, the parameters in Lemma are:  $m=3^\nu p$, $h=3^{\nu+1}$, $n=3m$,  $d=3^\nu$.

\begin{example}
Let $m=45=3^2\cdot 5$, so $p=5$, $\nu=2$. Then $h=3^{\nu+1}=27$, $n=3m=135$, $d=3^\nu=9$.

Consider the problem (\ref{congmod3m}) with $u=22$ and $U=13$: find $x\mmod 135$ such that 
$$
x\equiv 22 \!\pmod{45}\quad  \text{and} \quad x\equiv 13 \!\pmod{27}.
$$
The compatibility condition (\ref{compat}) holds, 
since 
$$
22\equiv 13 \equiv 4 \!\!\pmod{9},
$$
 and $\bar u=4$.  Thus, by (\ref{CRT-gen-1}) we have 
 $$
 x'\equiv 2\!\!\pmod{5},\qquad x'\equiv 1\!\!\pmod{3}.
$$
 Using the Extended Euclid's Algorithm or just by observation, we find the answer: $x'=7 \mmod{15}$.
Now by (\ref{CRT-gen-2}), $x=(4+7\cdot 9)\mmod{135}$, so $x=67\mmod{135}$.
Answer check: $67\equiv 22\!\pmod{45}$ and $67\equiv 13\!\pmod{27}$. 
\end{example}

\section*{Acknowledgements}
M.K. would like to thank ACENET 
and the Digital Research Alliance of Canada 
for enabling computations reported in Section~\ref{ssec:ex-sudoku-random} of this paper.

\end{document}